\newcommand {\emptycomment}[1]{} %to remove paragraphs
\newcommand{\nc}{\newcommand}
\newcommand{\delete}[1]{}
\nc{\mfootnote}[1]{\footnote{#1}} % Use this to show footnotes
\nc{\todo}[1]{\tred{To do:} #1}
\nc{\mlabel}[1]{\label{#1}}  % Use this to suppress names
\nc{\mcite}[1]{\cite{#1}}  % Use this to suppress names
\nc{\mref}[1]{\ref{#1}}  % Use this to suppress names
\nc{\meqref}[1]{\ref{#1}} % Use this to suppress names
\nc{\mbibitem}[1]{\bibitem{#1}} % Use this to show number
\nc{\mlabel}[1]{\label{#1}  % Use the next two lines to show names
{\hfill \hspace{1cm}{\bf{{\ }\hfill(#1)}}}}
\nc{\mcite}[1]{\cite{#1}{{\bf{{\ }(#1)}}}}  % Use this lines to show names
\nc{\mref}[1]{\ref{#1}{{\bf{{\ }(#1)}}}}  % Use this lines to show names
\nc{\meqref}[1]{\eqref{#1}{{\bf{{\ }(#1)}}}} % Use this lines to show names
\nc{\mbibitem}[1]{\bibitem[\bf #1]{#1}} % Use this to show name
\def\b{\beta}\def\btl{\blacktriangleright}\def\btr{\blacktriangleleft}
\def\c{\cdot}\def\ci{\circ}
\def\d{\delta}\def\dd{\diamondsuit}\def\D{\Delta}\def\dr{\lessdot}\def\dl{\gtrdot}
\def\l{\lambda}\def\rc{\prec}\def\lc{\succ}\def\lr{\longrightarrow}
\def\o{\otimes}\def\om{\omega}
\def\s{\sigma}\def\ti{\times}\def\tl{\triangleright}\def\tr{\triangleleft}
\nc{\mrm}[1]{{\rm #1}}\nc{\id}{\mrm{id}}\nc{\End}{\mrm{End}} \nc{\frakB}{{\mathfrak B}}
\def\1{_{(1)}}\def\2{_{(2)}}\def\3{^{-1}}
\nc{\rdd}{{r_{\cdot}}^*}   \nc{\ldd}{{\ell_{\cdot}}^*}
\nc{\rcc}{{r_{\circ}}^*}   \nc{\lcc}{{\ell_{\circ}}^*}
\nc{\rab}{{r_{A^*}}^*}   \nc{\lab}{{\ell_{A^*}}^*}
\nc{\raa}{{r_{A}}^*}   \nc{\laa}{{\ell_{A}}^*}
\nc{\rb}{{r_{A^*}}}   \nc{\lb}{{\ell_{A^*}}}
\nc{\ra}{{r_{A}}}   \nc{\la}{{\ell_{A}}}
\nc{\bu}{{\bullet}}
\nc{\ts}[1]{\textcolor{red}{Tianshui:#1}}
\newtheorem{thm}{Theorem}[section]
\newtheorem{lem}[thm]{Lemma}
\newtheorem{cor}[thm]{Corollary}
\newtheorem{pro}[thm]{Proposition}
\theoremstyle{definition}
\newtheorem{defi}[thm]{Definition}
\newtheorem{ex}[thm]{Example}
\newtheorem{rmk}[thm]{Remark}
\begin{document}

%%%%%%%%%%%%%%%%%%%%%%%%%%%%%%%%%%%%%%%%%%%%%%%%%%%%%%%%%%%%%%%%%%
%%%%%%%%%%%%%%%%%%%%%%%%%%%%%%%%%%%%%%%%%%%%%%%%%%%%%%%%%%%%%%%%%%
\title[]{\bf Curved $\mathcal{O}$-operator systems}

\author{Tianshui Ma}
\address{School of Mathematics and Information Science, Henan Normal University, Xinxiang 453007, China.}
\email{matianshui@htu.edu.cn}

\author{Abdenacer Makhlouf}
\address{Universit{\'e} de Haute Alsace, IRIMAS-d\'epartement  de Math{\'e}matiques,  18, rue des Fr{\`e}res Lumi{\`e}re F-68093 Mulhouse, France.}
\email{abdenacer.makhlouf@uha.fr}

\author{Sergei Silvestrov}
\address{Division of Mathematics and Physics, School of Education,
 Culture and Communication, M\"alardalen University, 72123
  V\"aster{\aa}s,
% V\"asteras,
 Sweden.}
\email{sergei.silvestrov@mdu.se}

\date{\today}

\begin{abstract}
 In this paper, we introduce the notion of curved $\mathcal{O}$-operator systems as a generalization of T. Brzezi\'{n}ski's (curved) Rota-Baxter systems, and then investigate their relations with $\mathcal{O}$-operator systems, (tri)dendriform systems, pre-Lie algebras, associative Yang-Baxter pairs and quasitriangular covariant bialgebras.
 \end{abstract}

\subjclass{16T05, 16T25,16W99}

\keywords{$\mathcal{O}$-operator system; (tri)dendriform system; associative Yang-Baxter pair.}

\maketitle

%\tableofcontents
%\numberwithin{equation}{section}

\allowdisplaybreaks

\section{Introduction} \hskip\parindent
Rota-Baxter operators appeared first in probability theory in \cite{Ba}, while Rota-Baxter algebras were introduced  in the context of differential operators on commutative Banach algebras in   \cite{Ro1} and since then  intensively studied  in a wide range of areas in pure and applied mathematics, as well as in mathematical physics.  Likewise,  free Rota-Baxter algebras were discussed in \cite{Guo2,GK1}, differential Rota-Baxter algebras in \cite{GK2}, for connections with renormalization of quantum field theory  and Hopf algebra, see \cite{CK,ML}. One can refer to the book \cite{Guo1} for the detailed theory of Rota-Baxter algebras.  Dendriform algebras were introduced by Loday in  \cite{Lo}. The motivation to introduce these algebraic structures with two generating operations comes from $K$-theory. It turned out later that they are connected to several areas in mathematics and physics, including Hopf algebras, homotopy Gerstenhaber algebra, operads, homology, combinatorics and quantum field theory where they occur in the theory of renormalization of Connes and Kreimer. Later the notion of tridendriform algebra were introduced by Loday and Ronco in their study of polytopes and Koszul duality, see \cite{LR}. The relation between Rota-Baxter algebras and (tri)dendriform algebras was first observed by Aguiar \cite{Ag0}, then  discussed by Ebrahimi-Fard in \cite{EF}, see also \cite{EG,EFM1,EFM2,AB}. 

The notion of $\mathcal{O}$-operator on an associated algebra with a representation was introduced in \cite{BGN1}. Such a structure appeared independently in \cite{Uc} under the name of generalized Rota-Baxter operator.
It is an analogue of the $\mathcal{O}$-operator defined by Kupershmidt as a natural generalization of the operator form of the classical Yang-Baxter equation (\cite{Ku}). The $\mathcal{O}$-operators have also a close relationship with the associative Yang-Baxter equation.
 Later, Bai, Guo and Ni introduced the extended $\mathcal{O}$-operator generalizing the concept of $\mathcal{O}$-operators and studied their  relationships with associative Yang-Baxter equations \cite{BGN1}. $\mathcal{O}$-operators are also often called relative Rota-Baxter operators.

  In 2016, in an attempt to develop and extend aforementioned connections between Rota-Baxter algebras, dendriform algebras and infinitesimal bialgebras, T. Brzezi\'{n}ski introduced the notion of a Rota-Baxter system in \cite{Br1}. In particular it has been shown that to any Rota-Baxter system one can associate a dendriform algebra and, in fact, any dendriform algebra of a particular kind arises from a Rota-Baxter system. In \cite{Br2}, T. Brzezi\'{n}ski presented the curved version of Rota-Baxter system and investigated the relations with weak pseudotwistors, differential graded algebras and pre-Lie algebras. But the relations between curved Rota-Baxter systems and (tri)dendriform algebras was never considered. Curved Rota-Baxter systems generalize Rota-Baxter operators and algebras at least in a threefold way. First, when the curvature vanishes, the triple $(A, R, S)$ is a Rota-Baxter system and hence the choice of $S$ to be $R+\lambda id$ with $\lambda \in K$ to be $S+\lambda id$ makes it into a Rota-Baxter algebra of weight $\lambda$. On the other hand, a Rota-Baxter algebra of weight $\lambda$ is obtained from $(A, R, S, \omega)$ by setting $R=S$ and $\omega(a\otimes b)=\lambda(ab)$, for all $a, b\in A$.
Examples  of Rota-Baxter systems are obtained from quasitriangular covariant bialgebras hereby introduced as a natural extension of infinitesimal bialgebras introduced in \cite{JR} and studied in \cite{Ag1,Ag2}. In \cite{Bai1}, the author presented a notion of antisymmetric infinitesimal bialgebras and also provided a equivalent characterizations by double construction and matched pairs.

The main purpose of this paper is to  introduce and study  a notion of (curved) $\mathcal{O}$-operator system, which generalizes both T. Brzezi\'{n}ski's Rota-Baxter systems (see \cite[Definition 2.1]{Br1}) and $\mathcal{O}$-operators (see \cite[Definition 2.7]{BGN1}) and  the notion of (tri)dendriform systems. We  investigate relationships within $\mathcal{O}$-operator systems, pre-Lie algebras, (tri)dendriform systems, associative Yang-Baxter pairs and quasitriangular covariant bialgebras.
 The paper is organized as follows. In Section 2, we recall some basics and relevant  definitions. In Section 3, we introduce the notions of curved $\mathcal{O}$-operator system and (tri)dendriform systems and discuss their connections and also with associative algebras  and extended bimodules algebras. In Section 4, we discuss $\mathcal{O}$-operator systems and associative Yang-Baxter pairs. Section 5 is devoted to a class of special curved $\mathcal{O}$-operator systems, mainly  generalized Rota-Baxter algebras with respect to associative compatible pairs and  double curved Rota-Baxter systems.

 \section{Preliminaries}

 Throughout this paper, $K$ will be a field, and all vector spaces, tensor products, and
 homomorphisms are over $K$. We denote by $\id_M$ the identity map on $M$.  

 We recall first some basics and useful definitions, see \cite{Bai1,Br1,Ra12}.

\begin{defi}\mlabel{de:2.1} A {\bf Rota-Baxter algebra of weight $\l$} is an associative algebra $A$ together with a linear map $R : A \lr A$, such that
for all $a, b\in A$ and $\l\in K$,
\begin{equation}\mlabel{eq:2.1}
 R(a)R(b)=R(aR(b))+R(R(a)b)+\l ab.
 \end{equation}
 We refer to such a  Rota-Baxter algebra by a pair  $(A, R)$ and the  linear operator $R$ is called a {\bf Rota-Baxter operator of weight $\l$} on $A$.
\end{defi}

\begin{defi}\mlabel{de:2.2} Let $A$ be an associative algebra and $M$ be a linear space. Let $\tl: A\o M\lr M$ and $\tr: M\o A\lr M$ be two linear maps. Then $(M, \tl, \tr)$ is called an {\bf $A$-bimodule} if, for all  $a, b\in A$ and $x\in M$,
\begin{equation}\mlabel{eq:2.2}
 a\tl (b\tl x)=(ab)\tl x,\quad (x\tr a)\tr b=x\tr (ab), \quad (a\tl x)\tr b=a\tl (x\tr b). \end{equation}

An {\bf $A$-bimodule map $f$ from $(M, \tl_M, {_M\tr})$ to $(N, \tl_N, {_N\tr})$} is a linear map $f: M\lr N$ such that $f$ is a left $A$-module map from $(M, \tl_M)$ to $(N, \tl_N)$ and at the same time $f$ is a right $A$-module map $(M, {_M\tr})$ to $(N, {_N\tr})$.

An $A$-bimodule $(M, \tl, \tr)$ is {\bf faithful} if $a\tl M=0$ implies $a=0$ and simultaneously $M\tr a=0$ implies $a=0$.
\end{defi}

 \begin{defi}\mlabel{de:2.3} A {\bf (left) pre-Lie algebra (or left symmetric algebra)} $A$ is a vector space with a binary operation $(x, y)\mapsto xy$ satisfying,
 for all $x, y, z \in A$,
 \begin{equation}\mlabel{eq:2.3}
 (xy)z-x(yz)=(yx)z-y(xz).
 \end{equation}

 \end{defi}

 We consider the following associative analogue of the classical Yang-Baxter equation.

 \begin{defi}\mlabel{de:2.4} Let $A$ be an associative algebra. An {\bf associative Yang-Baxter pair} is a pair of elements $r, s\in A\o A$ that satisfy the following equations:
 \begin{equation}\mlabel{eq:2.4}
 r_{13}r_{12}-r_{12}r_{23}+s_{23}r_{13}=0,
 \end{equation}
 \begin{equation}\mlabel{eq:2.5}
 s_{13}r_{12}-s_{12}s_{23}+s_{23}s_{13}=0,
 \end{equation}
 that is,
 $$
 r^1\bar{r}^1\o \bar{r}^2\o r^2-r^1\o r^2\bar{r}^1\o \bar{r}^2+r^1\o s^1\o s^2r^2=0,
 $$
 $$
 s^1r^1\o r^2\o s^2-s^1\o s^2\bar{s}^1\o \bar{s}^2+s^1\o \bar{s}^1\o \bar{s}^2s^2=0,
 $$
 where $\bar{r}$ and $\bar{s}$ are the copies of $r$ and $s$, respectively.

\end{defi}

We note that when $s=r$ in Definition \mref{de:2.4}, then one can get the associative Yang-Baxter equation in \cite{Ag1,Ag2}.

\section{Curved $\mathcal{O}$-operator systems}\mlabel{se:cos} In this section, we introduce a notion of curved $\mathcal{O}$-operator system,  which generalizes both T. Brzezi\'{n}ski's Rota-Baxter system (see \cite[Definition 2.1]{Br1}) and the $\mathcal{O}$-operator (see \cite[Definition 2.7]{BGN1}).

\subsection{Definition and Examples}
 \begin{defi}\mlabel{de:2.1-1} A system $(A, M, R, S, \omega)$ consisting of an associative algebra $A$, an $A$-bimodule $(M, \triangleright, \triangleleft)$ and three linear maps $R, S: M\longrightarrow A$, $\omega: M\otimes M\longrightarrow M$ is called a {\bf curved $\mathcal{O}$-operator system associated to $(M, \triangleright, \triangleleft)$} if, for all $x, y\in M$,
 \begin{align}\mlabel{eq:2.1-1}
 R(x)R(y)&=R(R(x)\triangleright y+x\triangleleft S(y)+\omega(x\otimes y))=R(R(x)\triangleright y+x\triangleleft S(y))+R\omega(x\otimes y),
\\
\mlabel{eq:2.2-1}
 S(x)S(y)&=S(R(x)\triangleright y+x\triangleleft S(y)+\omega(x\otimes y))=S(R(x)\triangleright y+x\triangleleft S(y))+S\omega(x\otimes y).
 \end{align}

When $\omega=0$, we call $(A, M, R, S)$ an {\bf $\mathcal{O}$-operator system associated to $(M, \triangleright, \triangleleft)$}.

A {\bf morphism of (curved) $\mathcal{O}$-operator systems from $(A, M, R, S, \omega)$ to $(B, N, P, T, \nu)$} is a pair $(f, g)$, where $f: A\longrightarrow B$ is an algebra map, and $g: M\longrightarrow N$ is an $A$-bimodule map such that $f\circ R=P\circ g$, $f\circ R=T\circ g$, $f\circ S=P\circ g$, $f\circ S=T\circ g$ (and $g\circ \omega=\nu\circ (g\otimes g)$).

In the sequel, we often denote $\om(x\o y)$ by $x\circ y$ and $(A, M, R, S, \omega)$ by $(A, M, R, S, \circ)$.
\end{defi}

Next we provide some examples of (curved) $\mathcal{O}$-operator system.
\begin{ex}\mlabel{ex:3.2}\mlabel{ex:2.2-1}
\begin{enumerate}[label=\textup{(\alph*)},ref=\textup{\alph*},leftmargin=*]
\item \mlabel{it:3.2-1}\mlabel{it:2.2-1-2}
Let $(A, \cdot)$ be an associative algebra and $(A, \cdot_\ell, \cdot_r)$ be  an $A$-bimodule, where $\cdot_\ell, \cdot_r$ denote the left and right multiplication, respectively. Then an (curved) $\mathcal{O}$-operator system $(A, A, R, S)$ ($(A, A, R, S, \omega)$) associated to $(A, \cdot_\ell, \cdot_r)$ is exactly a (curved) Rota-Baxter system in \cite[Definition 2.1]{Br1} (\cite[Definition 1.1]{Br2}).
\item \mlabel{it:3.2-2} If $R=S$, then we  get the usual $\mathcal{O}$-operator for $(M, \tl, \tr)$ studied in \cite{BGN1}. \mlabel{it:2.2-1-3} When $R=S$ and $\omega(x\otimes y)=\lambda xy$ in (\mref{it:2.2-1-2}), we  get that $R$ is a Rota-Baxter operator of weight $\lambda$ in \cite{Ba,Ro1}. 
\item \mlabel{it:2.2-1-4} If $(M, \omega, \triangleright, \triangleleft)$ is an $A$-bimodule algebra and $R=S$, then a  curved $\mathcal{O}$-operator system $(A, M, R$, $R, \lambda \omega)$ is the $\mathcal{O}$-operator of weight $\lambda$ associated to $(M, \omega, \triangleright, \triangleleft)$ (see \cite[Definition 2.7]{BGN1}).
\item \mlabel{it:3.2-4}  $(A, M, R, 0)$ is an $\mathcal{O}$-operator system associated to $(M, \tl, \tr)$ if and only if
 \begin{equation}\mlabel{eq:3.3}
 R(x)R(y)=R(R(x)\tl y).
\end{equation}
 $(A, M, 0, R)$ is an $\mathcal{O}$-operator system associated to $(M, \tl, \tr)$ if and only if
 \begin{equation}\mlabel{eq:3.4}
 R(x)R(y)=R(x\tl R(y)).
 \end{equation}
 \item \mlabel{it:2.2-1-5} Let $(M, \triangleright, \triangleleft)=(A, \c_\ell, \c_r)$, $R=S$ and $\omega_1=\omega_2=-\c \circ (R\otimes R)$. Then a curved $\mathcal{O}$-operator system $(A, A, R, R, \omega)$ is called Reynolds algebra in \cite{Uc2}.
 \item \mlabel{it:2.2-1-6} Let $A$ be an associative algebra with unit $1_A$, $R=S$ and $\omega(a\otimes b)=-aR(1_A)b$. Then a curved $\mathcal{O}$-operator system $(A, A, R, R, \omega)$ is the TD-algebra in \cite{Le}.
 \item \mlabel{it:2.2-1-7} Let $(M, \triangleright, \triangleleft)=(A, \c_\ell, \c_r)$, $R=S$ and $\omega(x\otimes y)=-R(x\c y)$. Then $R$ is a Nijenhuis operator in \cite{CGM}.
 \end{enumerate}
 \end{ex}

 \begin{ex} \mlabel{it:3.2-5}  As a generalization of $\s$-twisted Rota-Baxter algebra which is motivated by Jackson $q$-integral (see \cite{Br1,EFM2}), we give the definition of $\s$-twisted $\mathcal{O}$-operator algebra.

 Let $A$ be an associative algebra, $R: M\lr A$ be a linear map, $(M, \tl, \tr)$ be an $A$-bimodule, and $\s: A\lr A$ be an algebra map. Then  $(A, M, R)$ is called a {\it $\s$-twisted $\mathcal{O}$-operator algebra} if
 \begin{equation}\mlabel{eq:3.5}
 R(x)R(y)=R(R(x)\tl y+x\tr R^{\s}(y)),
 \end{equation}
 where $x, y\in M$ and $R^{\s}=\s\ci R$.

 If $(A, M, R)$ is a $\s$-twisted $\mathcal{O}$-operator algebra, then $(A, M, R, R^{\s})$ is an $\mathcal{O}$-operator system associated to $(M, \tl, \tr)$.
 \end{ex}

\begin{ex}\mlabel{ex:2.3-1}
 Let $A$ be a 2-dimensional associative algebra where the multiplication is defined, with respect to a basis  $\{e_1,e_2\}$,  by
\begin{align*}
& e_1\cdot e_1=e_1, && e_1\cdot  e_2=  e_2,
& e_2\cdot e_1=e_2,  && e_2 \cdot e_2=
e_2.
\end{align*}
Consider the $A$-bimodule structure on a 1-dimensional vector space $M$ generated by $x$, where the right and left actions are  defined as
\begin{align*}
& e_1\triangleright x=x, &&
e_2\triangleright x=0,  &
 x\triangleleft e_1=0, &&
x\triangleleft e_2=0.
\end{align*}
The corresponding curved $\mathcal{O}$-operator systems which are not $\mathcal{O}$-operator systems  are defined  as
\begin{itemize}
\item $R(x)=0, \  S(x)=p  e_1, \ w(x,x)=p x,$
\item $R(x)=0, \  S(x)=p  e_2, \ w(x,x)=p x,$
\item $R(x)=0, \  S(x)=p(e_1-  e_2), \ w(x,x)=p x,$
\item $R(x)=p  e_2, \  S(x)=0, \ w(x,x)=p x,$
\item $R(x)=p  e_2, \  S(x)=p e_2, \ w(x,x)=p x,$
\item $R(x)=p  e_2, \  S(x)=p e_1, \ w(x,x)=p x,$
\item $R(x)=p  e_2, \  S(x)=p (e_1-e_2), \ w(x,x)=p x,$
\end{itemize}
where $p$ is a parameter. Moreover, we have the following $\mathcal{O}$-operator systems ($w(e_1,e_1)=0$).
\begin{itemize}
\item $R(x)=p  e_1, \  S(x)=0,$
\item $ R(x)=p  e_1, \   S(x)=p  e_2,$
\item $ R(x)=p  e_1, \   S(x)=p  e_1,$
\item $ R(x)=p e_1, \   S(x)=p (e_1-e_2),$
\item $ R(x)=p( e_1- e_2), \ S(x)=0,$
\item $ R(x)=p( e_1-e_2), \ S(x)=p e_2,$
\item $ R(x)=p( e_1- e_2), \ S(x)=p e_1,$
\item $ R(x)=p( e_1- e_2), \ S(x)=p( e_1- e_2).$
\end{itemize}

Now, consider the $A$-bimodule structure on the same vector space and  where the right and left actions are  defined as
\begin{align*}
& e_1\triangleright x=x, &&
e_2\triangleright x=x,  &
 x\triangleleft e_1=x, &&
x\triangleleft e_2=x.
\end{align*}
The corresponding curved $\mathcal{O}$-operator systems which are not  $\mathcal{O}$-operator systems  are defined  as
\begin{itemize}
\item $R(x)=0, \  S(x)=p  (e_1-e_2), \ w(x,x)=p x,$
\item $R(x)=p e_2, \  S(x)=p e_2, \ w(x,x)=-p x,$
\item $R(x)=p e_2, \  S(x)=p e_1, \ w(x,x)=-p x,$
\item $R(x)=p e_1, \  S(x)=p e_2, \ w(x,x)=-p x,$
\item $R(x)=p e_1, \  S(x)=p e_1, \ w(x,x)=-p x,$
\item $R(x)=p (e_1-e_2), \  S(x)=0, \ w(x,x)=p x,$
\item $R(x)=p (e_1-e_2), \  S(x)=p (e_1-e_2), \ w(x,x)=p x,$
\end{itemize}
where   $p$ is a parameter.

Finally, we consider  the $A$-bimodule structure on the 2-dimensional  vector space generated by $\{x,y\}$ and  where the right and left actions are  defined as
\begin{align*}
 e_1\triangleright x=x, && e_1\triangleright y=y, &&
e_2\triangleright x=x,  && e_2\triangleright y=y, && \\
 x\triangleleft e_1=x, && y\triangleleft e_1=y, &&
x\triangleleft e_2=x, &&y\triangleleft e_2=y.&&
\end{align*}
We have a  curved $\mathcal{O}$-operator system defined by
$$
R(x)=0, \ R(y)=p_1e_1, \  S(x)=p_2  (e_1-e_2), \ S(y)=p_1e_2,
$$
$$
w(x, x)= p_2 x, \ w(x,y)=-p_1 x, \ w(y,x)=-p_1 x,\ w(y,y)=-p_1 y,
$$
where $p_1$ and $p_2$ are parameters.
\end{ex}

\begin{pro}\mlabel{pro:3.3} Let $(A, \cdot)$ be an associative algebra and $(M, \tl, \tr)$, $(A, \cdot_\ell, \cdot_r)$ be  two $A$-bimodules. If $R, S: M\lr A$ are $A$-bimodule maps, then $(A, M, R, S)$ is an $\mathcal{O}$-operator system associated to $(M, \tl, \tr)$ if and only if, for all $x, y\in M$,
 \begin{equation}\mlabel{eq:3.6}
 R(x)S(y)=0.
 \end{equation}
 \end{pro}

 \begin{proof} When $R, S: M\lr A$ are $A$-bimodule maps, then the conditions \eqref{eq:2.1-1} and \eqref{eq:2.2-1} for $\omega=0$ are equivalent to  \eqref{eq:3.6}, respectively.
 \end{proof}

\subsection{Extended bimodule algebras}

 The notion of $A$-bimodule algebra was introduced in \cite{BGN1}, it can be seen as a special case of matched pair of two algebras in \cite{Bai1}.

 \begin{defi}\mlabel{de:2.4-1} Let $(A, \cdot)$ and $(M, \circ)$ be two associative algebras and $\triangleright: A\otimes M\longrightarrow M$, $\triangleleft: M\otimes A\longrightarrow M$, $R, S: M\longrightarrow A$ be four linear maps. We call $(M, \circ, \triangleright, \triangleleft, R, S)$ an {\bf extended $A$-bimodule algebra} if $(M, \triangleright, \triangleleft)$ is an $A$-bimodule and the following conditions hold, for all  $x, y, z\in M$,
 \begin{equation}\mlabel{eq:2.3-1}
 R(x)\triangleright (y\circ z)=(R(x)\triangleright y)\circ z,\ (x\circ y)\triangleleft S(z)=x\circ (y\triangleleft S(z)),\ x\circ (R(y)\triangleright z)=(x\triangleleft S(y))\circ z.
 \end{equation}
 \end{defi}

 \begin{rmk}\mlabel{rmk:2.5-1}
 \begin{enumerate}[label=\textup{(\alph*)},ref=\textup{\alph*},leftmargin=*]
 \item \mlabel{it:rmk:3.5-1} If the product $\circ$ on $M$ is zero, then an extended $A$-bimodule algebra is exactly an $A$-bimodule.
 \item \mlabel{it:rmk:3.5-2} If $(M, \circ, \triangleright, \triangleleft)$ is an $A$-bimodule algebra, then $(M, \circ, \triangleright, \triangleleft, R, S)$ is an extended $A$-bimodule algebra if and only if, for all $x, y, z\in M$,
 \begin{equation}\mlabel{eq:2.4-a}
 x\circ ((R-S)(y)\triangleright z)=0~~\hbox{or}~~(x\triangleleft(R-S)(y))\circ z=0,
 \end{equation}
In fact, since $(M, \circ, \triangleright, \triangleleft)$ is an $A$-bimodule algebra, it is obvious that  \eqref{eq:2.4-a} is equivalent to the last equation in \eqref{eq:2.3-1}.
 \item \mlabel{it:rmk:3.5-3} Let $(A, \cdot)$ and $(A, \circ)$ be associative algebras. Then an extended $A$-bimodule algebra $(A, \circ, \c_\ell, \c_r, \id_A, \id_A)$ is exactly a totally compatible associative dialgebra introduced in \mcite{ZBG}, which will be studied in the  last section.
 \end{enumerate}
 \end{rmk}

 \begin{thm}\mlabel{thm:2.14} Let $A$ be an associative algebra, $(M, \circ, \triangleright, \triangleleft, R, S)$ be an extended $A$-bimodule algebra and $\star: M\otimes M\longrightarrow M$ be a linear map defined by
 \begin{equation}\mlabel{eq:2.21-a}
 x\star y=R(x)\triangleright y+x\triangleleft S(y)+x\circ y,\quad x, y\in M.
 \end{equation}
 Then $(M, \star)$ is an associative algebra if and only if
 \begin{equation}\mlabel{eq:2.22-a}
 (R(x)R(y)-R(x\star y))\triangleright z=x\triangleleft (S(y)S(z)-S(y\star z)).
 \end{equation}
\end{thm}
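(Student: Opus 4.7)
The statement is an associativity criterion, so the plan is to write out $(x\star y)\star z$ and $x\star(y\star z)$ explicitly, cancel all the terms that match automatically using the axioms available, and show that what remains is exactly (\ref{eq:2.22}).

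First I would expand, using trilinearity of $\star$ in its arguments,
\begin{align*}
(x\star y)\star z &= R(x\star y)\triangleright z + (x\star y)\triangleleft S(z) + (x\star y)\circ z,\\
x\star(y\star z) &= R(x)\triangleright(y\star z) + x\triangleleft S(y\star z) + x\circ (y\star z),
\end{align*}
then substitute the definition (\ref{eq:2.21}) of $x\star y$ and $y\star z$ inside each of the six outer terms. At this stage I would use the bimodule axioms (\ref{eq:1.2}) to transform each nested expression of the form $(R(x)\triangleright y)\triangleleft S(z)$ or $R(x)\triangleright(R(y)\triangleright z)$, and the three extended bimodule algebra identities (\ref{eq:2.3}) to rewrite $(R(x)\triangleright y)\circ z$, $(x\triangleleft S(y))\circ z$, and $(x\circ y)\triangleleft S(z)$. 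The associativity of the product $\circ$ on $V$ (which is part of Definition \ref{de:2.4}) disposes of the $(x\circ y)\circ z$ versus $x\circ(y\circ z)$ terms.

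After this bookkeeping, I expect the four ``mixed'' terms $R(x)\triangleright(y\triangleleft S(z))$, $R(x)\triangleright(y\circ z)$, $x\circ(R(y)\triangleright z)$, and $x\circ(y\triangleleft S(z))$ to appear identically on both sides, and the purely-$\circ$ term to cancel by associativity of $\circ$. The only surviving difference is
\begin{equation*}
(x\star y)\star z - x\star(y\star z) = \bigl(R(x\star y)-R(x)R(y)\bigr)\triangleright z + x\triangleleft\bigl(S(y)S(z)-S(y\star z)\bigr),
\end{equation*}
so associativity of $\star$ is equivalent, upon rearranging signs, to the identity (\ref{eq:2.22}). This establishes both implications simultaneously, and the ``if and only if'' is immediate.

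The computation is entirely mechanical; the only real obstacle is the length of the expansion (six outer terms, each producing three inner terms, giving eighteen summands per side) and keeping the bookkeeping straight so that nothing is dropped or double-counted. To contain this, I would lay out both sides as aligned sums in parallel, indicate beside each line which of (\ref{eq:1.2}) or (\ref{eq:2.3}) (or associativity of $\circ$) is used to move terms into a common normal form, and then match terms by inspection. No further identities beyond those assumed in the hypothesis are needed, which is what makes the biconditional clean.
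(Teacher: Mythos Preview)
Your proposal is correct and follows essentially the same route as the paper: expand both $(x\star y)\star z$ and $x\star(y\star z)$, use the bimodule axioms (\ref{eq:1.2}) and the extended bimodule algebra identities (\ref{eq:2.3}) together with associativity of $\circ$ to cancel the matching mixed terms, and observe that the residual difference is precisely $(R(x\star y)-R(x)R(y))\triangleright z + x\triangleleft(S(y)S(z)-S(y\star z))$, which vanishes exactly when (\ref{eq:2.22}) holds. The paper's proof is terser but structurally identical; your displayed residual formula is in fact a bit more explicit than what the paper writes.
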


\begin{proof} For all $x, y, z\in M$, we have
 \begin{eqnarray*}
 (x\star y)\star z
 &=&R(x\star y)\triangleright z+(R(x)\triangleright y)\triangleleft S(z)+(x\triangleleft S(y))\triangleleft S(z)+(x\circ y)\triangleleft S(z)\\
 &&+(R(x)\triangleright y)\circ z+(x\triangleleft S(y))\circ z+(x\circ y)\circ z\\
 &\stackrel{\eqref{eq:2.2}\eqref{eq:2.3-1}}{=}&R(x\star y)\triangleright z+R(x)\triangleright (y\triangleleft S(z))+x\triangleleft (S(y)S(z))+x\circ (y\triangleleft S(z))\\
 &&+R(x)\triangleright (y\circ z)+x\circ (R(y)\triangleright z)+x\circ (y\circ z)
 \\
 x\star (y\star z)
 &\stackrel{\eqref{eq:2.2}}{=}&(R(x)R(y))\triangleright z+R(x)\triangleright (y\triangleleft S(z))+R(x)\triangleright (y\circ z)+x\triangleleft S(y\star z)\\
 &&+x\circ (R(y)\triangleright z)+x\circ (y\triangleleft S(z))+x\circ (y\circ z).
 \end{eqnarray*}
 Therefore, we finish the proof.   \end{proof}

 \begin{cor}\mlabel{cor:2.15} Let $A$ be an associative algebra, $(M, \circ, \triangleright, \triangleleft, R, S)$ be an extended $A$-bimodule algebra. If $(A, M, R, S, \circ)$ is a curved $\mathcal{O}$-operator system associated to $(M, \triangleright, \triangleleft)$, then $(M, \star)$ is an associative algebra, where $\star$ is given by
\eqref{eq:2.21-a}.
 \end{cor}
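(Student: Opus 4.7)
The plan is to deduce this directly from Theorem \ref{thm:2.14} by showing that the curved $\mathcal{O}$-operator axioms force both sides of Eq.(\ref{eq:2.22}) to vanish. Since the data $(A, V, R, S, \circ)$ is assumed to be a curved $\mathcal{O}$-operator system with the bimodule algebra multiplication $\circ$ playing the role of the curvature $\omega$, Eq.(\ref{eq:2.1}) specializes to
\[
R(x)R(y) = R\bigl(R(x)\triangleright y + x\triangleleft S(y) + x\circ y\bigr),
\]
and the expression in the outer parentheses is exactly $x\star y$ by the definition (\ref{eq:2.21}). Hence $R(x)R(y) - R(x\star y) = 0$, so the left-hand side of (\ref{eq:2.22}) is zero for all $z \in V$.

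By the same argument applied to Eq.(\ref{eq:2.2}), we get $S(y)S(z) = S(y\star z)$, so the right-hand side of (\ref{eq:2.22}) also vanishes. Theorem \ref{thm:2.14} then immediately gives the associativity of $\star$, i.e. that $(V, \star)$ is an algebra.

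There is essentially no obstacle here, since all the real work (expanding $(x\star y)\star z$ and $x\star(y\star z)$ using the extended bimodule axioms and comparing them) has already been done inside the proof of Theorem \ref{thm:2.14}. The only point worth checking before invoking that theorem is the identification of the curvature map in Definition \ref{de:2.1} with the algebra product $\circ$ on $V$, which is precisely the convention built into the notation $(A, V, R, S, \circ)$. Thus the proof reduces to a one-line application of the theorem once this identification is noted.
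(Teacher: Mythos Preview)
Your proof is correct and is essentially identical to the paper's own argument: the paper also observes that Eqs.~(\ref{eq:2.1}) and (\ref{eq:2.2}) (with $\omega=\circ$) give $R(x)R(y)=R(x\star y)$ and $S(y)S(z)=S(y\star z)$, so Eq.~(\ref{eq:2.22}) holds and Theorem~\ref{thm:2.14} applies.
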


 \begin{proof} By  \eqref{eq:2.1} and \eqref{eq:2.2}, we can get $R(x)R(y)=R(x\star y)$ and $S(y)S(z)=S(y\star z)$. Thus \eqref{eq:2.22-a} holds.   \end{proof}

 \begin{rmk}
 When $\circ=0$ in Corollary \mref{cor:2.15}, then we  obtain the corresponding result for $\mathcal{O}$-operator systems (not curved).
 \end{rmk}

 \begin{pro}\mlabel{pro:2.17} Let $A$ be an associative algebra, $(M, \circ, \triangleright, \triangleleft)$ be an $A$-bimodule algebra, $R: M\longrightarrow A$ a linear map. Then the operation given by
 \begin{equation}\mlabel{eq:2.23-a}
 x\star_R y=R(x)\triangleright y+x\triangleleft R(y)+x\circ y
 \end{equation}
 is associative if and only if
 \begin{equation}\mlabel{eq:2.24-a}
 (R(x)R(y)-R(x\star_R y))\triangleright z=x\triangleleft (R(y)R(z)-R(y\star_R z)),  \end{equation}
 for all  $x, y, z\in M$.
 \end{pro}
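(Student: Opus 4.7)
The plan is to derive Proposition 2.17 as a direct specialization of Theorem 2.14 with $S=R$. The first step is to note that by Remark 2.5(1), when $R=S$ the defining condition (2.4) of an extended $A$-bimodule algebra is trivially satisfied (both sides become zero because $R-S=0$), so every $A$-bimodule algebra $(V,\circ,\triangleright,\triangleleft)$ is automatically an extended $A$-bimodule algebra in this situation. Consequently the hypotheses of Theorem 2.14 are all in force.

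Next, one compares the data. With $S=R$ the operation $\star$ of (2.21) coincides verbatim with $\star_R$ of (2.23), and the criterion (2.22), namely
\begin{equation*}
(R(x)R(y)-R(x\star y))\triangleright z = x\triangleleft(S(y)S(z)-S(y\star z)),
\end{equation*}
becomes exactly (2.24). The biconditional in Theorem 2.14 then reads precisely as the biconditional asserted in Proposition 2.17, so the statement follows.

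For completeness one could instead carry out a direct computation, mirroring the proof of Theorem 2.14. I would expand $(x\star_R y)\star_R z$ and $x\star_R(y\star_R z)$ into nine summands each via (2.23), then use the $A$-bimodule axioms (1.2) and the $A$-bimodule algebra compatibility to rewrite terms like $(R(x)\triangleright y)\circ z$, $(x\circ y)\triangleleft R(z)$, and $(x\triangleleft R(y))\circ z$. All terms that do not involve either $R(x\star_R y)\triangleright z$, $(R(x)R(y))\triangleright z$, $x\triangleleft(R(y)R(z))$, or $x\triangleleft R(y\star_R z)$ cancel pairwise, leaving exactly the difference appearing in (2.24).

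The main obstacle in the direct approach is purely bookkeeping: making sure all nine $\times$ nine terms are matched correctly so that only the four residual expressions remain. In the Theorem 2.14 route, there is essentially no obstacle at all, since the work has already been done. I would therefore present the proof as a one-line invocation of Theorem 2.14, noting the Remark 2.5(1) reduction.
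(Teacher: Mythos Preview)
Your proposal is correct and matches the paper's own proof, which consists of the single line ``Let $R=S$ in Theorem~\ref{thm:2.14}.'' The only minor quibble is that the trivial satisfaction of (\ref{eq:2.4}) when $R=S$ is the content of Remark~\ref{rmk:2.5}(2) rather than Remark~\ref{rmk:2.5}(1), but this does not affect the argument.
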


 \begin{proof} Let $R=S$ in Theorem \mref{thm:2.14}.   \end{proof}

 \begin{rmk}\mlabel{rmk:2.18}
 Proposition \mref{pro:2.17} is exactly \cite[Lemma 2.12]{BGN1} by replacing $x\circ y$ by $\lambda x\circ y$.
 \end{rmk}

\begin{defi}\mlabel{de:2.19}
 {\rm Assume  $CharK\neq 2$. We call
 \begin{equation}\mlabel{eq:2.25-a}
 \alpha=\frac{R+S}{2} \hbox{~~and~~}\beta=\frac{R-S}{2}
 \end{equation}
 the {\bf symmetrizer and antisymmetrizer of $R$ and $S$}, respectively (see [Sec. 2.3]\cite{BGN1}). }
\end{defi}

 \begin{defi} \cite[Definition 2.7]{BGN1} \mlabel{de:cor:2.20}
 Let $A$ be an associative algebra, $(M, \triangleright, \triangleleft)$ be an $A$-bimodule, $k\in K$. A linear map $\b: M\lr A$ is called {\bf a balanced linear map of mass $k$} if, for all $x, y\in M$,
 \begin{equation}\mlabel{eq:2.26-a}
 k \beta(x)\triangleright y=k x\triangleleft \beta(y).
 \end{equation}
 \end{defi}

\begin{cor}\mlabel{cor:2.20}
 Let $A$ be an associative algebra, $(M, \circ, \triangleright, \triangleleft)$ be an $A$-bimodule algebra, $R, S: M\longrightarrow A$ two linear maps and $\alpha, \beta$ their symmetrizer and antisymmetrizer defined by  \eqref{eq:2.25-a}. If $\beta$ is a balanced linear map of mass 1, then $(M, \star)$, where $\star$ is given by  \eqref{eq:2.21-a}, is an associative algebra if and only if $\alpha$ satisfies  \eqref{eq:2.24-a}.
\end{cor}

 \begin{proof} Since $\beta$ is balanced, for all $x, y\in M$,
 $$
 x\star y=R(x)\triangleright y+x\triangleleft S(y)+x\circ y=\alpha(x)\triangleright y+x\triangleleft \alpha(y)+x\circ y=x\star_\alpha y.
 $$
 Then the conclusion follows from Proposition \mref{pro:2.17}.   \end{proof}

\begin{pro}\mlabel{pro:2.21}
 Let $(A, M, R, S, \circ)$ be a curved $\mathcal{O}$-operator system associated to $(M, \triangleright, \triangleleft)$. We  define, for all $x, y\in M$,
 \begin{equation}\mlabel{eq:2.27-a}
 x\diamond y=R(x)\triangleright y+x\triangleleft S(y).
 \end{equation}
 Then $(M, \diamond)$ is an associative algebra if and only if
 \begin{equation}\mlabel{eq:2.28-a}
 R(x\circ y)\triangleright z=x\triangleleft S(y\circ z).
 \end{equation}
 \end{pro}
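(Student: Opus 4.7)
The plan is a direct check of associativity of $\diamond$, exploiting the two halves of the defining equations (\ref{eq:2.1}) and (\ref{eq:2.2}) in the form
$$
R(R(x)\triangleright y + x\triangleleft S(y)) = R(x)R(y) - \omega_1(x\otimes y),
$$
$$
S(R(y)\triangleright z + y\triangleleft S(z)) = S(y)S(z) - \omega_2(y\otimes z),
$$
together with the $A$-bimodule axioms in (\ref{eq:1.2}).

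I would first expand $(x\diamond y)\diamond z$ by substituting the definition of $\diamond$ in both slots: this yields $R(R(x)\triangleright y + x\triangleleft S(y))\triangleright z$ together with two mixed terms $(R(x)\triangleright y)\triangleleft S(z)$ and $(x\triangleleft S(y))\triangleleft S(z)$. Applying the $\omega_1$-identity to the first term and the bimodule relations $(a\triangleright v)\triangleleft b = a\triangleright(v\triangleleft b)$ and $(v\triangleleft a)\triangleleft b = v\triangleleft(ab)$ to the other two, I obtain
$$
(x\diamond y)\diamond z = (R(x)R(y))\triangleright z - \omega_1(x\otimes y)\triangleright z + R(x)\triangleright(y\triangleleft S(z)) + x\triangleleft(S(y)S(z)).
$$
Next, I expand $x\diamond(y\diamond z)$ analogously, this time using the $\omega_2$-identity on $S(y\diamond z)$ and the left-module axiom $a\triangleright(b\triangleright v) = (ab)\triangleright v$:
$$
x\diamond(y\diamond z) = (R(x)R(y))\triangleright z + R(x)\triangleright(y\triangleleft S(z)) + x\triangleleft(S(y)S(z)) - x\triangleleft \omega_2(y\otimes z).
$$

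Subtracting, the four symmetric terms cancel and I am left with
$$
(x\diamond y)\diamond z - x\diamond(y\diamond z) = -\omega_1(x\otimes y)\triangleright z + x\triangleleft \omega_2(y\otimes z),
$$
so that associativity of $\diamond$ is equivalent to identity (\ref{eq:2.28}). No step should present any real obstacle: the only thing to watch is bookkeeping of the seven or eight summands produced by each triple product, and making sure to invoke exactly the form of (\ref{eq:2.1})--(\ref{eq:2.2}) that exposes $\omega_1$ and $\omega_2$ rather than $\omega$ itself, since it is $\omega_1$ and $\omega_2$ (not $\omega$) that appear in the stated condition.
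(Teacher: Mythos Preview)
Your proof is correct and is essentially the direct computation that underlies the paper's argument. The paper's own proof simply invokes \cite[Theorem~2.14]{Ma} (an associator formula for $\diamond$ in the uncurved setting) together with Eqs.~(\ref{eq:2.1}) and (\ref{eq:2.2}); your expansion of $(x\diamond y)\diamond z$ and $x\diamond(y\diamond z)$ is exactly what that cited result unpacks to, so the two arguments coincide once the external reference is unfolded.
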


 \begin{proof} Similar to Theorem \mref{thm:2.14}.   \end{proof}

\begin{cor}\mlabel{cor:2.23}
 Let $A$ be an associative algebra, $(M, \circ, \triangleright, \triangleleft)$ be an $A$-bimodule algebra and $R: M\longrightarrow A$ be an $\mathcal{O}$-operator of weight $\lambda$ associated to $(M, \circ, \triangleright, \triangleleft)$. Then the operation given by
 \begin{equation}\mlabel{eq:2.29-a}
 x\odot y=R(x)\triangleright y+x\triangleleft R(y).
 \end{equation}
 is associative if and only if
 \begin{equation}\mlabel{eq:2.30-a}
 \lambda R(x\circ y)\triangleright z=\lambda x\triangleleft R(y\circ z),
 \end{equation}
 where $x, y, z\in M$.
\end{cor}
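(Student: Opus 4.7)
The plan is to deduce this corollary as a direct specialization of Proposition \ref{pro:2.21}, using Example \ref{ex:2.2}(4) to recast the $\mathcal{O}$-operator of weight $\lambda$ as a curved $\mathcal{O}$-operator system. Concretely, since $(V,\circ,\triangleright,\triangleleft)$ is an $A$-bimodule algebra and $R$ is an $\mathcal{O}$-operator of weight $\lambda$ associated to it, the tuple $(A,V,R,R,\lambda\circ)$ is a curved $\mathcal{O}$-operator system associated to $(V,\triangleright,\triangleleft)$, with $S=R$ and curvature $\omega(x\otimes y)=\lambda(x\circ y)$.

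Next, I would match the operations: in Proposition \ref{pro:2.21} the product $x\diamond y=R(x)\triangleright y+x\triangleleft S(y)$ coincides with $x\odot y=R(x)\triangleright y+x\triangleleft R(y)$ once $S=R$, so associativity of $\odot$ is equivalent to associativity of $\diamond$ in this specialization. Then I would compute the induced maps $\omega_1=R\circ\omega$ and $\omega_2=S\circ\omega=R\circ\omega$ on elementary tensors, obtaining
\begin{equation*}
\omega_1(x\otimes y)=R(\lambda\,x\circ y)=\lambda R(x\circ y),\qquad \omega_2(y\otimes z)=\lambda R(y\circ z).
\end{equation*}

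Finally, substituting these expressions into the associativity criterion (\ref{eq:2.28}) from Proposition \ref{pro:2.21} yields precisely
\begin{equation*}
\lambda R(x\circ y)\triangleright z=x\triangleleft \lambda R(y\circ z)=\lambda\,x\triangleleft R(y\circ z),
\end{equation*}
which is (\ref{eq:2.30}). I do not anticipate any real obstacle: the argument is purely a substitution of data, and no additional use of the bimodule or $\mathcal{O}$-operator axioms is needed beyond what is already absorbed in Proposition \ref{pro:2.21} and Example \ref{ex:2.2}(4). The only thing to check carefully is that the two occurrences of $R$ in $S=R$ and in the curvature $\omega=\lambda\circ$ are compatible with the conventions of Definition \ref{de:2.1}, which is straightforward.
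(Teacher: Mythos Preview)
Your proposal is correct and follows essentially the same route as the paper: specialize Proposition~\ref{pro:2.21} with $S=R$ and curvature $\omega(x\otimes y)=\lambda\,x\circ y$ (via Example~\ref{ex:2.2}(4)), so that $\omega_1(x\otimes y)=\omega_2(x\otimes y)=\lambda R(x\circ y)$ and condition~(\ref{eq:2.28}) becomes~(\ref{eq:2.30}). The paper's proof is the same one-line substitution.
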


 \begin{proof} Let $R=S$ and $\omega(x\otimes y)=\lambda (x\circ y)$ in Proposition \mref{pro:2.21}.   \end{proof}

\begin{rmk}\mlabel{rmk:2.24}
  \eqref{eq:2.30-a} is exactly one of the two conditions in the definition of balanced homomorphism (see \cite[ (21)]{BGN1}).
\end{rmk}

\subsection{(Tri)Dendriform systems}

 The notion of dendriform algebra was introduced by J. L. Loday in \cite{Lo}. We extend this notion to dendriform system as follows:

 \begin{defi}\mlabel{de:2.6-a} A {\bf dendriform system $(A, M, \prec, \succ, \lessdot, \gtrdot)$} consists of two linear space $A$, $M$ and four linear maps $\prec: A\otimes A\longrightarrow A$, $\succ: A\otimes A\longrightarrow A$, $\lessdot: M\otimes A\longrightarrow M$ and $\gtrdot: A\otimes M\longrightarrow M$ such that $(A, \prec, \succ)$ is a dendriform algebra, i.e.,
 for all $a, b, c\in A$,
 \begin{equation}\mlabel{eq:2.8-a}
 (a\prec b)\prec c=a\prec (b\succ c+b\prec c),
 \end{equation}
 \begin{equation}\mlabel{eq:2.9-a}
 a\succ (b\prec c)=(a\succ b)\prec c,
 \end{equation}
 \begin{equation}\mlabel{eq:2.10-a}
 a\succ (b\succ c)=(a\succ b+a\prec b)\succ c
 \end{equation}
 hold and for all $x\in M$, the following conditions are satisfied:
 \begin{equation}\mlabel{eq:2.5-a}
 (x\lessdot a)\lessdot b=x\lessdot (a\succ b+a\prec b),
 \end{equation}
 \begin{equation}\mlabel{eq:2.6-a}
 a\gtrdot (x\lessdot b)=(a\gtrdot x)\lessdot b,
 \end{equation}
 and
 \begin{equation}\mlabel{eq:2.7-a}
 a\gtrdot (b\gtrdot x)=(a\succ b+a\prec b)\gtrdot x.
 \end{equation}
 \end{defi}

 \begin{rmk}\mlabel{rmk:ex:3.5}
 \begin{enumerate}[label=\textup{(\alph*)},ref=\textup{\alph*},leftmargin=*]
 \item \mlabel{it:3.5-2} If $A=M$ and $\rc=\dr, \lc=\dl$, then a dendriform system $(A, A, \rc, \lc, \rc, \lc)$ is exactly a dendriform algebra $(A, \rc, \lc)$.
 \item \mlabel{it:3.5-1} Let $(A, \rc, \lc)$ be a dendriform algebra. Define $a\star b=a\rc b+a\lc b$, then $(A, \star)$ is an associative algebra (see \cite{Lo}). Thus if $(A, M, \rc, \lc, \dr, \dl)$ is a dendriform system, then $(M, \dr, \dl)$ is an $(A, \star)$-bimodule by   \eqref{eq:2.5-a}-\eqref{eq:2.7-a}.
 \item \mlabel{it:3.5-3} Let $(A, M, \rc, \lc, \dr, \dl)$ be a dendriform system. Define a linear map $\ast: M\o M\lr M$ by $x\ast y=x\dl y-y\dr x$. Then $(M, \ast)$ is a pre-Lie algebra. 
 \item \mlabel{it:3.5-4} If $\rc=0$, then $(A, \lc)$ is an associate algebra, and  the dendriform system $(A, M, 0, \lc, \dr, \dl)$ is equivalent to $(M, \dr, \dl)$ is an $(A, \lc)$-bimodule. Similarly, if $\lc=0$, then $(A, \rc)$ is an associative algebra, and  the dendriform system $(A, M, \rc, 0, \dr, \dl)$ is equivalent to  $(M, \dr, \dl)$ is an $(A, \rc)$-bimodule.
 \end{enumerate}
 \end{rmk}

\begin{thm}\mlabel{thm:2.7} Let $(A, M, \circ, \triangleright, \triangleleft, R, S)$ be an extended $A$-bimodule algebra. Define linear maps $\succ: M\otimes A\longrightarrow A$, $\prec: A\otimes M\longrightarrow A$ and $\gtrdot, \lessdot: M\otimes M\longrightarrow M$ as follows:
 \begin{equation}\mlabel{eq:2.11-a}
 x\succ a=R(x)a,~a\prec x=a S(x),~x\gtrdot y=R(x)\triangleright y,~x\lessdot y=x\triangleleft S(y)+x\circ y,
 \end{equation}
 where $x, y\in M$ and $a\in A$. If $(A, M, R, S, \circ)$ is a curved $\mathcal{O}$-operator system associated to $(M, \triangleright, \triangleleft)$, then $(M, A, \lessdot, \gtrdot, \prec, \succ)$ is a dendriform system.
 \end{thm}

 \begin{proof} For all $x, y, z\in M$, we can check  \eqref{eq:2.8-a}-\eqref{eq:2.10-a} for $(M, \dr, \dl)$ as follows:
 \begin{eqnarray*}
 (x\dr y)\dr z
 &=&(x\tr S(y))\tr S(z)+(x\tr S(y))\circ z\\
 &=&x\tr (S(y)S(z))+(x\tr S(y))\circ z\\
 &\stackrel{(\mref{eq:2.2-1})}{=}&x\tr S(R(y)\triangleright z+y\triangleleft S(z)+y\circ z)+(x\tr S(y))\circ z\\
 &\stackrel{(\mref{eq:2.3-1})}{=}&x\tr S(R(y)\triangleright z)+x\circ (R(y)\triangleright z)+x\tr S(y\triangleleft S(z))\\
 &&+x\circ (y\triangleleft S(z))+x\tr S(y\circ z)+x\circ(y\circ z)\\
 &=&x\dr (R(y)\triangleright z+y\triangleleft S(z)+y\circ z)=x\dr (x\gtrdot y+x\lessdot y),
 \end{eqnarray*}
 so  \eqref{eq:2.8-a} holds for $(M, \dr, \dl)$. Similarly by  \eqref{eq:2.1-1} and \eqref{eq:2.3-1}, one can prove  \eqref{eq:2.9-a} and \eqref{eq:2.10-a} for $(M, \dr, \dl)$. Then $(M, \dr, \dl)$ is a dendriform algebra. Conditions \eqref{eq:2.5-a}-\eqref{eq:2.7-a} can be checked by \eqref{eq:2.1-1}, \eqref{eq:2.2-1} and the associativity of $A$. Thus $(M, A, \lessdot, \gtrdot, \prec, \succ)$ is a dendriform system. \end{proof}

\begin{rmk}\mlabel{rmk:2.8}
By the proof of Theorem \mref{thm:2.7} and Remark \mref{rmk:2.5-1}, we have: Let $A$ be an associative algebra, $(M, \circ, \triangleright, \triangleleft)$ be an $A$-bimodule algebra and  $(A, M, R, S, \circ)$ be a curved $\mathcal{O}$-operator system associated to $(M, \triangleright, \triangleleft)$. Define linear maps $\succ: M\otimes A\longrightarrow A$, $\prec: A\otimes M\longrightarrow A$, $\gtrdot: M\otimes M\longrightarrow M$ and $\lessdot: M\otimes M\longrightarrow M$ by  \eqref{eq:2.11-a}. Then $(M, A, \lessdot, \gtrdot, \prec, \succ)$ is a dendriform system if and only if  \eqref{eq:2.4-a} holds.
\end{rmk}

\begin{cor}\mlabel{cor:2.9} Let $A$ be an associative algebra, $(M, \circ, \triangleright, \triangleleft)$ be an $A$-bimodule algebra,  $R: M\longrightarrow A$ be a linear map and $\lambda\in K$. Define linear maps $\succ: M\otimes A\longrightarrow A$, $\prec: A\otimes M\longrightarrow A$, $\gtrdot: M\otimes M\longrightarrow M$ and $\lessdot: M\otimes M\longrightarrow M$ as follows:
 $$
 x\succ a=R(x)a,~a\prec x=aR(x),~x\gtrdot y=R(x)\triangleright y,~x\lessdot y=x\triangleleft R(y)+\lambda x\circ y,
 $$
 where $x, y\in M$ and $a\in A$. If $R$ is an $\mathcal{O}$-operator of weight $\lambda$ associated to $(M, \triangleright, \triangleleft)$, then $(M, A, \lessdot, \gtrdot, \prec, \succ)$ is a dendriform system.
\end{cor}

\begin{proof} By Theorem \mref{thm:2.7} and Example \mref{ex:3.2} (\mref{it:2.2-1-4}), we  achieve the proof.\end{proof}

 For $\mathcal{O}$-operator systems, we have the following result.

 \begin{pro}\mlabel{pro:thm:3.6}Let $A$ be an associative algebra, $(M, \tl, \tr)$ be an $A$-bimodule and $R, S: M\lr A$ be two linear maps. Define linear maps, for all $a\in A$ and $x, y\in M$, by
 \begin{eqnarray*}\mlabel{eq:3.13}
 \lc: M\o A\lr A,~x\o a\mapsto R(x)a, && \rc: A\o M\lr A,~~~a\o x\mapsto a S(x),
\\
\mlabel{eq:3.14}
 \dl: M\o M\lr M,~x\o y\mapsto R(x)\tl y, && \dr: M\o M\lr M,~x\o y\mapsto x\tr S(y)
 \end{eqnarray*}
Then,
 \begin{enumerate}[label=\textup{(\alph*)},ref=\textup{\alph*},leftmargin=*]
 \item \mlabel{it:pro:3.5-1}  If $(A, M, R, S)$ is an $\mathcal{O}$-operator system associated to $(M, \tl, \tr)$, then $(M, A, \lessdot, \gtrdot, \prec, \succ)$ is a dendriform system.
 \item \mlabel{it:pro:3.5-2} If $A$ is a non-degenerate associative algebra or $M$ is a faithful $A$-bimodule, and $(M, A, \lessdot, \gtrdot, \prec, \succ)$ is a dendriform system, then $(A, M, R, S)$ is an $\mathcal{O}$-operator system associated to $(M, \tl, \tr)$.
 \end{enumerate}
 \end{pro}

 \begin{proof} (\mref{it:pro:3.5-1}) Let $\circ=0$ in Theorem \mref{thm:2.7}.

\noindent (\mref{it:pro:3.5-2}) By the non-degeneration of $A$ and the proof of  \eqref{eq:2.5-a}-\eqref{eq:2.7-a} or by the faithfulness of $M$ and the proof of \eqref{eq:2.8-a}-\eqref{eq:2.10-a} in Theorem \mref{thm:2.7}, we achieve the proof of part (\mref{it:pro:3.5-2}). \end{proof}

By the above propositions, we  get  relations between $\mathcal{O}$-operator systems and associative algebras and  pre-Lie algebras.

\begin{cor}\mlabel{cor:3.9}  Let $(A, M, R, S)$ be an $\mathcal{O}$-operator system associated to $(M, \tl, \tr)$. Then $(M, \star')$ with $\star': M\o M\lr M$, defined for all
 $x, y\in M$ by
 \begin{equation}\mlabel{eq:3.17}
 x\star' y=R(x)\tl y+x\tr S(y),
 \end{equation}
 is an associative algebra.

 Furthermore, with the maps defined by
 \begin{equation}\mlabel{eq:3.18}
 \dd_\ell: M\o A\lr A,\ x\dd_\ell a=R(x)a, \quad \dd_r: A\o M\lr A,\ a\dd_r x=aS(x),
 \end{equation}
 $(A, \dd_\ell, \dd_r)$ is a $(M, \star')$-bimodule.

 Moreover, in this case, we have that $R$ and $S$ are algebra maps from $(M, \star')$ to $A$, i.e.,
 $$
 R(x)R(y)=R(x\star' y),\quad S(x)S(y)=S(x\star' y).
 $$
 \end{cor}

\begin{proof} It is a consequence of item (\mref{it:pro:3.5-1}) in Proposition \mref{pro:thm:3.6} and item (\mref{it:3.5-1}) in Remark \mref{rmk:ex:3.5}. \end{proof}

\begin{cor}\mlabel{cor:3.10}   Let $(A, M, R, S)$ be an $\mathcal{O}$-operator system associated to $(M, \tl, \tr)$. Then, $(M, \ast)$ with $\ast: M\o M\lr M$, defined for all
 $x, y\in M$ by
 \begin{equation}\mlabel{eq:3.19}
 x\ast y=R(x)\rhd y-y\lhd S(x),
 \end{equation}
 is a pre-Lie algebra.
 \end{cor}

 \begin{proof} By item (\mref{it:pro:3.5-1}) in Proposition \mref{pro:thm:3.6} and item (\mref{it:3.5-3}) in Remark \mref{rmk:ex:3.5}, we get the result.  \end{proof}

 The following notion is a generalization of tridendriform algebra introduced in \cite{LR}.

 \begin{defi}\mlabel{de:2.10} A {\bf tridendriform system} is a septuple $(A, M, \prec, \succ, \cdot, \lessdot, \gtrdot)$ consisting of two linear spaces $A$ and $M$ and five linear maps $\prec: A\otimes M\longrightarrow A$, $\succ: M\otimes A\longrightarrow A$ and  $\lessdot, \gtrdot, \cdot: M\otimes M\longrightarrow M$ such that $(A, \prec, \succ, \cdot)$ is a tridendriform algebra, i.e., for all $a, b, c\in A$,
 \begin{equation}\mlabel{eq:2.15-a}
 (a\rc b)\rc c=a\rc (b\rc c+b\lc c+b\cdot c),
 \end{equation}
 \begin{equation}\mlabel{eq:2.16-a}
 a\lc (b\rc c)=(a\lc b)\rc c,
 \end{equation}
 \begin{equation}\mlabel{eq:2.17-a}
 a\lc (b\lc c)=(a\rc b+a\lc b+a\cdot b)\lc c,
 \end{equation}
 \begin{equation}\mlabel{eq:2.18-a}
 (a\rc b)\cdot c=a\cdot (b\lc c),~(a\lc b)\cdot c=a\lc (b\cdot c),
 \end{equation}
 \begin{equation}\mlabel{eq:2.19-a}
 (a\cdot b)\rc c=a\cdot (b\rc c), (a\cdot b)\cdot c=a\cdot (b\cdot c)
 \end{equation}
 the following conditions are satisfied for all $x\in M$,
 \begin{equation}\mlabel{eq:2.12-a}
 (x\dr a)\dr b=x\dr (a\rc b+a\lc b+a\cdot b),
 \end{equation}
 \begin{equation}\mlabel{eq:2.13-x}
 a\dl (x\dr b)=(a\dl x)\dr b,
 \end{equation}
 \begin{equation}\mlabel{eq:2.14-x}
 a\dl (b\dl x)=(a\rc b+a\lc b+a\cdot b)\dl x.
 \end{equation}
 \end{defi}

 \begin{rmk}\mlabel{rmk:ex:3.12}
 \begin{enumerate}[label=\textup{(\alph*)},ref=\textup{\alph*},leftmargin=*]
 \item \mlabel{it:3.12-2} If $A=M$ and $\rc=\dr, \lc=\dl$, then a tridendriform system $(A, A, \cdot, \rc, \lc, \rc, \lc)$ is exactly a tridendriform algebra $(A, \rc, \lc, \cdot)$.
 \item \mlabel{it:3.12-1} Let $(A, \rc, \lc, \cdot)$ be a dendriform algebra. Define $a\star' b=a\rc b+a\lc b+a\cdot b$, then $(A, \star')$ is an associative algebra (see \cite{Lo}). Thus if $(A, M, \cdot, \rc, \lc, \dr, \dl)$ is a tridendriform system, then $(M, \dr, \dl)$ is an $(A, \star')$-bimodule by \eqref{eq:2.12-a}-\eqref{eq:2.14-x}.
 \item \mlabel{it:3.12-3} When $\cdot=0$, then a tridendriform system $(A, M, \rc, \lc, 0, \dr, \dl)$ is a dendriform system.
 \end{enumerate}
 \end{rmk}

 A dendriform system can be derived from a tridendriform system.

\begin{lem}\mlabel{lem:tridtod} Let $(A, M, \rc, \lc, \cdot, \dr, \dl)$ be a tridendriform system. Define a linear map $\prec':A\o A\lr A$ by
 \begin{eqnarray}
 a\prec' b=a\prec b+a\cdot b,~~\forall~a, b\in A.
 \end{eqnarray}
 Then $(A, M, \rc', \lc, \dr, \dl)$ is a dendriform system.
\end{lem}

\begin{proof} The proof is direct, by checking the conditions in the definition of dendriform systems.
\end{proof}

\begin{thm}\mlabel{thm:2.11} Let $A$ be an associative algebra, $(M, \circ, \triangleright, \triangleleft, R, S)$ be an extended $A$-bimodule algebra. Define linear maps $\succ: M\otimes A\longrightarrow A$, $\prec: A\otimes M\longrightarrow A$ and $\gtrdot, \lessdot, \cdot: M\otimes M\longrightarrow M$ as follows:
 \begin{equation}\mlabel{eq:2.20-a}
 x\succ a=R(x)a,~a\prec x=aS(x),~x\gtrdot y=R(x)\triangleright y,~x\lessdot y=x\triangleleft S(y), ~x\cdot y=x\circ y,
 \end{equation}
 where $x, y\in M$ and $a\in A$. If $(A, M, R, S, \circ)$ is a curved $\mathcal{O}$-operator system associated to $(M, \triangleright, \triangleleft)$, then $(M, A, \lessdot, \gtrdot, \cdot, \prec, \succ)$ is a tridendriform system.
\end{thm}
\begin{proof} The proof is similar to Theorem \mref{thm:2.7}. \end{proof}

\begin{cor}\mlabel{cor:2.13}
 Under the assumption of Corollary \mref{cor:2.9}, let $\succ: M\otimes A\longrightarrow A$, $\prec: A\otimes M\longrightarrow A$ and $\gtrdot, \lessdot, \cdot: M\otimes M\longrightarrow M$ be linear maps defined as follows, for $x, y\in M$ and $a\in A$,
 $$
 x\succ a=R(x)a,\quad a\prec x=aR(x),\quad x\gtrdot y=R(x)\triangleright y,\quad x\lessdot y=x\triangleleft R(y),\quad x\cdot y=\lambda x\circ y.
 $$
 If $R$ is an $\mathcal{O}$-operator of weight $\lambda$ associated to $(M, \triangleright, \triangleleft)$, then $(M, A, \lessdot, \gtrdot, \cdot, \prec, \succ)$ is a tridendriform system.
\end{cor}

 \begin{proof} By Theorem \mref{thm:2.11} and Example \mref{ex:3.2} (\mref{it:2.2-1-4}), we finish the proof. \end{proof}

 \section{$\mathcal{O}$-operator systems and associative Yang-Baxter pairs}  \hskip\parindent
 The following theorem establishes a close relationship between $\mathcal{O}$-operator systems and associative Yang-Baxter pairs introduced in \cite{Br1}. Now let us recall some basic facts on the dual space from \cite{Bai1}.

 For a linear space $M$, denote the usual pairing between the dual space $M^*$ and $M$ by
 $$
 \langle \, ,\rangle : M^*\ti M\lr K, ~\langle  f, x \rangle =f(x), ~\forall~ x\in M, f\in M^*.
 $$

 Let $M$ be a linear space, $M^*$ its dual space and $r=r^1\o r^2\in M\o M$. Define two linear maps as follows:
 \begin{equation}\mlabel{eq:2.6}
 r, r^t: M^*\lr M,~r(f)=r^1\langle  f, r^2 \rangle   ,~~~r^t(f)=\langle  f, r^1 \rangle   r^2, ~~\forall~f\in M^*.
 \end{equation}
 If $r=-r^t$, then we call $r$ {\bf skew-symmetric}.

 Let $(A,\c)$ be an associative algebra and $(M, \tl, \tr)$ be an $A$-bimodule. Define
 \begin{eqnarray}
 &\langle  a\blacktriangleright f, x \rangle   =\langle  f, x\tr a \rangle,\qquad
  \langle  f\blacktriangleleft a, x \rangle   =\langle  f, a\tl x \rangle,&\mlabel{eq:2.6-1}
 \end{eqnarray}
 where $a\in A, f\in M^*$ and $x\in M$. Then $(M^*, \btl, \btr)$ is an $A$-bimodule, which we call the {\bf dual bimodule of $(M, \tl, \tr)$}.

 Specially, we denote the dual bimodule of $(A, \c_\ell, \c_r)$ by $(A^*, \bu_\ell, \bu_r)$, where $\c_\ell, \c_r$ denote the left and right multiplication, respectively.

 \begin{thm}\mlabel{thm:4.1} Let $(A, \c)$ be an associative algebra and $r, s\in A\o A$, which are identified as linear maps from $A^*$ to $A$.
 \begin{enumerate}[label=\textup{(\alph*)},ref=\textup{\alph*},leftmargin=*]
 \item \mlabel{it:thm:4.1-1} $(r, s)$ is an associative Yang-Baxter pair if and only if $r$ and $s$ satisfy, for $a^*, b^*\in A^*$,
 \begin{eqnarray}\mlabel{eq:4.1}
 r(a^*)r(b^*)&=& r(r(a^*)\bu_\ell b^*)-r(a^*\bu_r s^t(b^*)),
 \\
 \mlabel{eq:4.2}
 s^t(a^*)s^t(b^*) &=& -s^t(r(a^*)\bu_\ell b^*)+s^t(a^*\bu_r s^t(b^*)).
 \end{eqnarray}
 \item \mlabel{it:thm:4.1-2} If $r$ and $s$ are skew-symmetric, then $(r, s)$ is an associative Yang-Baxter pair if and only if $(A, A^*, r, s)$ is an $\mathcal{O}$-operator system associated to $(A^*, \bu_\ell, \bu_r)$.
 \end{enumerate}
 \end{thm}

 \begin{proof} It is obvious that $(A, \c_\ell, \c_r)$ is an $A$-bimodule, where $\c_\ell$ and $\c_r$ denote the left and right multiplications, respectively. Then $(A^*, \bu_\ell, \bu_r)$ is the dual bimodule of $(A, \c_\ell, \c_r)$, and for all $a,b\in A, a^*\in A^*$,
 \begin{equation}\mlabel{eq:4.3}
 \langle  a\bu_\ell a^*, b \rangle   =\langle  a^*, b\c_r a \rangle,\quad \langle  a^*\bu_r a, b \rangle   =\langle  a^*, a\c_\ell b \rangle.
 \end{equation}

 \noindent (\mref{it:thm:4.1-1}) For all $a^*, b^*, c^*\in A^*$, by  \eqref{eq:2.6} and \eqref{eq:4.3}, we have
 \begin{eqnarray*}
 \langle  a^*, r^1\c \bar{r}^1 \rangle   \langle  b^*, \bar{r}^2 \rangle   \langle  c^*, r^2 \rangle &=& \langle  a^*, r(c^*)\c r(b^*) \rangle,\\
 -\langle  a^*, r^1 \rangle   \langle  b^*, r^2\c \bar{r}^1 \rangle   \langle  c^*, \bar{r}^2 \rangle
 &=&-\langle  a^*, r^1 \rangle   \langle  b^*, r^2\c r(c^*) \rangle   \\
 &=&-\langle  a^*, r^1 \rangle   \langle  r(c^*)\bu_\ell b^*, r^2 \rangle   \\
 &=&-\langle  a^*, r(r(c^*)\bu_\ell b^*) \rangle, \\*[0,2cm]
 \langle  a^*, r^1 \rangle   \langle  b^*, s^1 \rangle   \langle  c^*, s^2\c r^2 \rangle
 &=&\langle  a^*, r^1 \rangle   \langle  c^*, s^t(b^*)\c r^2 \rangle   \\
 &=&\langle  a^*, r^1 \rangle   \langle  c^*\bu_r s^t(b^*), r^2 \rangle   \\
 &=&\langle  a^*, r(c^*\bu_r s^t(b^*)) \rangle   .
 \end{eqnarray*}
 Then  \eqref{eq:2.4} $\Leftrightarrow$ \eqref{eq:4.1}.

 Similarly, we have
 \begin{eqnarray*}
 \langle  a^*, s^1 \rangle   \langle  b^*, \bar{s}^1 \rangle   \langle  c^*, \bar{s}^2s^2 \rangle   &=& \langle  c^*, s^t(b^*)\c s^t(a^*) \rangle,
 \\
 -\langle  a^*, s^1 \rangle   \langle  b^*, s^2\c \bar{s}^1 \rangle   \langle  c^*, \bar{r}^2 \rangle
 &=&-\langle  b^*, s^t(a^*)\c \bar{s}^1 \rangle   \langle  c^*, \bar{r}^2 \rangle   \\
 &=&-\langle  b^*\bu_r s^t(a^*), \bar{s}^1 \rangle   \langle  c^*, \bar{r}^2 \rangle   \\
 &=&-\langle  c^*, s^t(b^*\bu_r s^t(a^*)) \rangle, \\*[0,2cm]
 \langle  a^*, s^1\c r^1 \rangle   \langle  b^*, r^2 \rangle   \langle  c^*, s^2 \rangle
 &=&\langle  a^*, s^1\c r(b^*) \rangle   \langle  c^*, s^2 \rangle   \\
 &=&\langle  r(b^*)\bu_\ell a^*, s^1 \rangle   \langle  c^*, s^2 \rangle   \\
 &=&\langle  c^*, s^t(r(b^*)\bu_\ell a^*) \rangle   .
 \end{eqnarray*}
 Then  \eqref{eq:2.5} $\Leftrightarrow$  \eqref{eq:4.2}. Therefore, we finish the proof of item (\mref{it:thm:4.1-1}).

\noindent (\mref{it:thm:4.1-2}) If $r$ and $s$ are skew-symmetric, then  \eqref{eq:4.1} and \eqref{eq:4.2} are exactly  \eqref{eq:2.1-1} and \eqref{eq:2.2-1} (when $\om=0$) for $r, s$, respectively. And the rest of the proof of item (\mref{it:thm:4.1-2}) can be obtained by item (\mref{it:thm:4.1-1}). \end{proof}

\begin{cor}\mlabel{cor:4.2} Let $(A, \c)$ be an associative algebra and $r\in A\o A$, which is identified as a linear map from $A^*$ to $A$. If $r$ is skew-symmetric, then
 \begin{enumerate}[label=\textup{(\alph*)},ref=\textup{\alph*},leftmargin=*]
 \item \mlabel{it:cor:4.2-1}  $r$ is a solution of the associative Yang-Baxter equation in $A$ if and only if $r$ is an $\mathcal{O}$-operator associated to $(A^*, \bu_\ell, \bu_r)$.
 \item \mlabel{it:cor:4.2-2} $r$ is a solution of the Frobenius-separability or FS-equation (see \textup{\cite{CMZ}})
 $$
 r_{12}r_{23}=r_{23}r_{13}=r_{13}r_{12}
 $$
 if and only if $(A, A^*, r, 0)$ and $(A, A^*, 0, r)$ are $\mathcal{O}$-operator systems associated to $(A^*, \bu_\ell, \bu_r)$.
 \end{enumerate}
 \end{cor}

 \begin{proof} (\mref{it:cor:4.2-1}) When $r=s$ in Definition \mref{de:2.4}, then \eqref{eq:2.4} and \eqref{eq:2.5} are all  associative Yang-Baxter equations in \cite{Ag1}. Then we  finish the proof by Theorem \mref{thm:4.1}.

\noindent (\mref{it:cor:4.2-2}) Let $s=0$ and $r=0$ in Theorem \mref{thm:4.1}. \end{proof}

\begin{cor}\mlabel{cor:4.3} Let $(A, \c)$ be an associative algebra and $r, s\in A\o A$, which are identified as linear maps from $A^*$ to $A$. If $r$ and $s$ are skew-symmetric, then $(A, A^*, r, s)$ is an $\mathcal{O}$-operator system associated to $(A^*, \bu_\ell, \bu_r)$ if and only if $(A, \d_r, \d_s, \D)$ is a quasitriangular covariant bialgebra, where for all $a\in A$,
 $$
 \d_r(a)=a\c r^1\o r^2-r^1\o r^2\c a,
 $$
 $$
 \d_s(a)=a\c s^1\o s^2-s^1\o s^2\c a,
 $$
 $$
 \D(a)=a\c r^1\o r^2-s^1\o s^2\c a.
 $$
 \end{cor}
\begin{proof} It is a direct consequence of Theorem \mref{thm:4.1} and \cite[Proposition 3.15 and Corollary 3.17]{Br1}. \end{proof}

\section{A special case}\mlabel{se:sc}
  In this section we consider a special case of curved $\mathcal{O}$-operator systems. First let us recall the notion of totally compatible associative dialgebras introduced in \mcite{ZBG}, which can be seen as the Koszul dual of compatible associative algebra.

 \begin{defi}(\cite[Definition 2.1]{ZBG})\mlabel{de:3.1-a} Let $A$ be a linear space and $\mu, \nu: A\otimes A\longrightarrow A$ be two linear maps such that
 $$
 \mu(\mu\otimes \id)=\mu(\id\otimes \mu),~\nu(\nu\otimes \id)=\nu(\id\otimes \nu),
 $$
 $$
 \nu(\mu\otimes \id)=\mu(\id\otimes \nu)=\mu(\nu\otimes \id)=\nu(\id\otimes \mu).
 $$
 Then we call $(A, \mu, \nu)$ a {\bf totally compatible associative dialgebra} or $(\mu, \nu)$ an {\bf associative compatible pair on $A$}.
\end{defi}

\begin{rmk}\mlabel{rmk:3.2-a}
 \begin{enumerate}[label=\textup{(\alph*)},ref=\textup{\alph*},leftmargin=*]
 \item \mlabel{it:rmk:3.2-a1} If $(A, \mu)$ is an associative algebra, then $(\mu, \mu)$ is an associative compatible pair on $A$.
 \item \mlabel{it:rmk:3.2-a2} If for all $x, y\in A$, we write $\mu(x\otimes y)=xy$ and $\nu(x\otimes y)=x\diamond y$, then the
 above compatibility conditions can be rewritten for $x, y, z\in A$ as
 \begin{equation}\mlabel{eq:3.1-a}
 (x y)z=x(y z),~~(x\diamond y)\diamond z=x\diamond (y\diamond z),
 \end{equation}
 \begin{equation}\mlabel{eq:3.2-a}
 (x y)\diamond z=x(y\diamond z)=(x\diamond y)z=x\diamond (y z).
 \end{equation}
 \item \mlabel{it:rmk:3.2-a3} $(\mu, \nu)$ is an associative compatible pair on $A$ if and only if $(A, \nu, \mu_\ell, \mu_r)$ is an $(A, \mu)$-bimodule algebra.
 \end{enumerate}
\end{rmk}

\begin{defi}\mlabel{de:3.3-a} Let $A$ be a vector space and $R: A\longrightarrow A$, $\mu, \nu: A\otimes A\longrightarrow A$ (write $\mu(x\otimes y)=xy$ and $\nu(x\otimes y)=x\diamond y$) be three linear maps such that, for $x, y\in A$,
 \begin{equation}\mlabel{eq:3.3-a}
 R(x)R(y)=R(R(x)y+xR(y)+x\diamond y).
 \end{equation}
Then we call $(A, \mu, \nu, R)$ a {\bf generalized Rota-Baxter algebra}.
 \end{defi}

\begin{rmk}\mlabel{rmk:3.4-a}
 \begin{enumerate}[label=\textup{(\alph*)},ref=\textup{\alph*},leftmargin=*]
 \item \mlabel{it:rmk:3.4-a1} If $(A, \mu)$ is an associative algebra and $x\diamond y=\lambda \mu(x\otimes y)$, then $(A, \mu, \nu, R)$ is a Rota-Baxter algebra of weight $\lambda$.
 \item \mlabel{it:rmk:3.4-a2}If $(A, \mu)$ is an associative algebra and $\omega(x\otimes y)=R(x\diamond y)$, then the generalized Rota-Baxter algebra $(A, \mu, \nu, R)$ is exactly Brzezi\'{n}ski's curved Rota-Baxter algebra (when $R=S$ in \cite[Definition 1.1]{Br2}).
 \end{enumerate}
\end{rmk}

\begin{ex}\mlabel{ex:3.4a}
Let $A$ be a 2-dimensional associative algebra
where the multiplication is defined, with respect to a basis  $\{e_1, e_2\}$,  by
\begin{equation*}
 e_1\cdot e_1=e_1, \quad e_1\cdot  e_2=  e_2,
\quad e_2\cdot e_1=e_2, \quad e_2 \cdot e_2=
e_2.
\end{equation*}
It forms an associative compatible pair with the algebra defined with respect to the same basis by
\begin{align*}
& e_1\diamond e_1=(a-b)e_1+ b e_2, \quad e_1\diamond   e_2= a e_2,
\quad e_2\diamond  e_1=a e_2,  \quad e_2 \diamond  e_2= a e_2,
\end{align*}
where $a$ and $b$ are parameters.
The following linear maps $R$ together with the associative compatible pair result in generalized Rota-Baxter algebras:
\begin{enumerate}[label=\textup{\arabic*)},ref=\textup{\arabic*},leftmargin=1cm]
\item $R(e_1)=(b-a)e_1, \  R(e_2)=0,$
\item $R(e_1)=-a e_1, \  R(e_2)=-a e_1,$
\item $R(e_1)=-b e_2, \  R(e_2)=-a e_2,$
\item $R(e_1)=b(e_1-e_2), \  R(e_2)=a(e_1- e_2),$
\item $R(e_1)=(b-a)e_1-be_2, \  R(e_2)=-a e_2,$
\item $R(e_1)=-a e_2, \  R(e_2)=-a  e_2,$
\item $R(e_1)=(b-a)e_1-a e_2, \  R(e_2)=-a e_2,$
\item $R(e_1)=a(e_1-e_2), \  R(e_2)=a( e_1- e_2),$
\item $R(e_1)=(a-b) e_2, \  R(e_2)=0,$
\item $R(e_1)=(b-2 a) e_1+(a-b)e_2, \  R(e_2)=-a e_1,$
\item $R(e_1)=(b-a) e_1+(a-b)e_2, \  R(e_2)=0.$
\end{enumerate}
\end{ex}

 Similar to the results in Section \mref{se:cos}, we get the following statements.

\begin{pro}\mlabel{pro:3.5-a}
 Let $(A, \mu, \nu, R)$ be a generalized Rota-Baxter algebra such that $(\mu, \nu)$ is an associative compatible pair on $A$.
 \begin{enumerate}[label=\textup{(\alph*)},ref=\textup{\alph*},leftmargin=*]
 \item \mlabel{it:pro:3.5-a1}  Set
 $$
 x\prec y=xR(y)+x\diamond y,\quad x\succ y=R(x)y.
 $$
 Then $(A, \prec, \succ)$ is a dendriform algebra.
 \item \mlabel{it:pro:3.5-a2} Set
 $$
 x\circ y=R(x)y-yR(x)+x\diamond y.
 $$
 Then $(A, \circ)$ is a pre-Lie algebra.
 \item \mlabel{it:pro:3.5-a3} Set
 $$
 x\prec y=xR(y), \quad x\succ y=R(x)y, \quad x\cdot y=x\diamond y.
 $$
 Then $(A, \prec, \succ, \cdot)$ is a tridendriform algebra.
 \item \mlabel{it:pro:3.5-a4}  Set
 $$
 x\ast y=R(x)y+xR(y)+x\diamond y.
 $$
 Then $(A, \ast)$ is an associative algebra.
 \end{enumerate}
\end{pro}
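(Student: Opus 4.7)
The plan is to recognize $(V,\mu,\nu,R)$ as a special case of a curved $\mathcal{O}$-operator system and then apply the theorems of Section 2 directly. Take $A=(V,\mu)$, view $V$ as an $A$-bimodule with $\triangleright$ and $\triangleleft$ the left and right multiplications of $\mu$, set $S=R$, and take the extended $A$-bimodule algebra structure to be $(V,\diamond,\triangleright,\triangleleft)$ furnished by Remark \ref{rmk:3.2}(3). Then the defining identity (\ref{eq:3.3}) for a generalized Rota-Baxter algebra is exactly (\ref{eq:2.1}) (which coincides with (\ref{eq:2.2}) since $R=S$) with $\omega=\diamond$. Hence $(A,V,R,R,\diamond)$ is a curved $\mathcal{O}$-operator system associated to $(V,\triangleright,\triangleleft)$, and the Section 2 machinery applies.

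For part (1), apply Theorem \ref{thm:2.7}. Its induced operations $x\gtrdot y = R(x)\triangleright y$ and $x\lessdot y = x\triangleleft S(y) + x\circ y$ specialize to $R(x)y$ and $xR(y)+x\diamond y$, which are precisely the $\succ$ and $\prec$ of the proposition. Because the axioms (\ref{eq:2.8})--(\ref{eq:2.10}) of a dendriform system involve only $\lessdot,\gtrdot$ and are formally the three defining axioms of a dendriform algebra, $(V,\prec,\succ)$ is a dendriform algebra. Part (3) is handled the same way via Theorem \ref{thm:2.11}: the operations $\gtrdot,\lessdot,\cdot$ produced by (\ref{eq:2.20}) translate to $R(x)y$, $xR(y)$, $x\diamond y$, and axioms (\ref{eq:2.15})--(\ref{eq:2.19}) restricted to these three operations are exactly the tridendriform algebra axioms.

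Part (4) follows from Corollary \ref{cor:2.15}: the associative operation $x\star y = R(x)\triangleright y + x\triangleleft S(y) + x\circ y$ specializes to $R(x)y + xR(y) + x\diamond y = x\ast y$. Part (2) follows from Proposition \ref{pro:2.25}: the pre-Lie operation $x\blacklozenge y = R(x)\triangleright y - y\triangleleft S(x) + x\circ y$ specializes to $R(x)y - yR(x) + x\diamond y$, which is the $\circ$ of the proposition.

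No serious obstacle is expected: the whole argument is an application of the Section 2 results once one verifies that the extended $A$-bimodule algebra condition (\ref{eq:2.3}) coincides with the compatibility conditions (\ref{eq:3.2}), which is transparent. The only mild care needed is to note that the $\prec,\succ$ of the (tri)dendriform \emph{system} in Theorems \ref{thm:2.7} and \ref{thm:2.11} are distinct operations from the $\prec,\succ$ of the (tri)dendriform \emph{algebra} in Proposition \ref{pro:3.5}; the latter come from the $V\otimes V\to V$ operations $\gtrdot,\lessdot$ (and $\cdot$) of the system, whose axioms form a self-contained sub-system reproducing the (tri)dendriform algebra axioms.
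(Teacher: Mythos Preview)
Your proposal is correct and follows exactly the route the paper intends: the paper gives no explicit proof of Proposition \ref{pro:3.5}, merely prefacing it with ``Similar to the results in Section 2, we have,'' and you have spelled out precisely how to instantiate the Section 2 machinery (Theorems \ref{thm:2.7} and \ref{thm:2.11}, Corollary \ref{cor:2.15}, Proposition \ref{pro:2.25}) via the identification in Remark \ref{rmk:3.2}(3). Your observation that the $(\lessdot,\gtrdot)$ (resp.\ $(\lessdot,\gtrdot,\cdot)$) part of the (tri)dendriform system axioms forms a self-contained (tri)dendriform algebra is the right way to extract parts (1) and (3) from the system statements.
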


\begin{rmk}\mlabel{rmk:3.6-a}
 \begin{enumerate}[label=\textup{(\alph*)},ref=\textup{\alph*},leftmargin=*]
 \item \mlabel{it:rmk:3.6-a1} If we set $x\diamond y=\lambda xy$ in Proposition \mref{pro:3.5-a}, then we  get the cases for the usual Rota-Baxter algebra of weight $\lambda$.
 \item \mlabel{it:rmk:3.6-a2} By Remark \mref{rmk:3.4-a}\mref{it:rmk:3.4-a2}, we can transfer the results in \cite{Br2} for Brzezi\'{n}ski's curved Rota-Baxter algebras to the case of generalized Rota-Baxter algebras. For example, we have: Let $(A, \mu, \nu, R)$ be a generalized Rota-Baxter algebra such that $(A, \mu)$ is an associative algebra. Set
 $$
 x\ast y=R(x)y+xR(y).
 $$
 Then $(A, \ast)$ is an associative algebra if and only if, for all $x, y, z\in A$,
 $$
 xR(y\diamond z)=R(x\diamond y)z.
 $$
 In particular, if $(A, \mu)$ has an identity, then $(A, \ast)$ is an associative algebra if and only if there exists a central element $\kappa \in A$ such that, for all $x, y\in A$,
 $$
 R(x\diamond y)=\kappa xy.
 $$
 \end{enumerate}
\end{rmk}

Following the definition of generalized Rota-Baxter algebra, replacing $R\circ \omega, S\circ \omega$ by $\omega_1, \omega_2$, respectively, we can also consider the double curved Rota-Baxter system as follows.

\begin{defi}\mlabel{de:3.7-a} A system $(A, R, S, \omega_1, \omega_2)$ consisting of an associative (but not necessarily unital) algebra $(A, \mu)$ and four linear maps $R, S: A\longrightarrow A$, $\omega_1, \omega_2: A\otimes A\longrightarrow A$ is called a {\it double curved Rota-Baxter system} if, for all $a, b\in A$,
 \begin{eqnarray}\mlabel{eq:3.4-a}
 R(a)R(b)=R(R(a)b+aS(b))+\omega_1(a\otimes b),
 \\
 \mlabel{eq:3.5-a}
 S(a)S(b)=S(R(a)b+aS(b))+\omega_2(a\otimes b).
 \end{eqnarray}
\end{defi}

 For double curved Rota-Baxter system $(A, R, S, \omega_1, \omega_2)$, we have the following statement.
\begin{cor}\mlabel{cor:3.8-a}
 Let $(A, R, S, \omega_1, \omega_2)$ be a double curved Rota-Baxter system. Define
 \begin{equation}\mlabel{eq:3.6-a}
 a\ast b=R(a)b+aS(b).
 \end{equation}
 Then $(A, \ast)$ is an associative algebra if and only if, for all $a, b, c\in A$,
 \begin{equation}\mlabel{eq:3.7-a}
 \omega_1(a\otimes b)c=a\omega_2(b\otimes c).
 \end{equation}
\end{cor}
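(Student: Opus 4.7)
The plan is to verify associativity of $\ast$ by direct expansion, using the two defining relations \eqref{eq:3.4} and \eqref{eq:3.5} of the double curved Rota-Baxter system to rewrite $R(a\ast b)$ and $S(b\ast c)$. Since $\ast$ is linear and built from $R$, $S$, and the associative product of $A$, this reduces to a routine calculation that cleanly isolates the $\omega_i$ terms.

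Concretely, I would first compute
\[
(a\ast b)\ast c = R(a\ast b)\,c + (a\ast b)\,S(c) = R\bigl(R(a)b + aS(b)\bigr)c + R(a)b\,S(c) + aS(b)S(c),
\]
and then apply \eqref{eq:3.4} to the first summand to replace $R(R(a)b+aS(b))$ by $R(a)R(b) - \omega_1(a\otimes b)$. Symmetrically, I would compute
\[
a\ast(b\ast c) = R(a)(b\ast c) + a\,S(b\ast c) = R(a)R(b)c + R(a)b\,S(c) + a\,S\bigl(R(b)c + bS(c)\bigr),
\]
and use \eqref{eq:3.5} to replace $S(R(b)c+bS(c))$ by $S(b)S(c) - \omega_2(b\otimes c)$.

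Subtracting the two expressions, the four terms $R(a)R(b)c$, $R(a)b\,S(c)$, $aS(b)S(c)$, and (after one use of associativity of $\mu$) $R(a)R(b)c$ again all cancel, leaving the single identity
\[
a\ast(b\ast c) - (a\ast b)\ast c \;=\; \omega_1(a\otimes b)\,c - a\,\omega_2(b\otimes c).
\]
Both the ``if'' and ``only if'' directions then follow immediately: if \eqref{eq:3.7} holds the right-hand side vanishes, and conversely, vanishing of the associator for all $a,b,c$ forces $\omega_1(a\otimes b)c = a\,\omega_2(b\otimes c)$ identically.

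There is essentially no obstacle here; the only thing to be careful about is bookkeeping in the expansion, making sure each application of associativity of $\mu$ (implicit in dropping parentheses such as $R(a)R(b)c$ or $aS(b)S(c)$) is legitimate, and applying \eqref{eq:3.4}, \eqref{eq:3.5} in the correct direction. The result can also be viewed as the specialization of Theorem \ref{thm:2.14} / Proposition \ref{pro:2.21} to the case $(V,\triangleright,\triangleleft) = (A, L, R)$ with the additional replacement of $R\circ\omega, S\circ\omega$ by independent maps $\omega_1, \omega_2$, but a direct two-line calculation is cleaner.
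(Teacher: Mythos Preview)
Your proof is correct and is essentially the same argument the paper uses: the paper simply writes ``Let $V=A$ in Proposition~\ref{pro:2.21}'', and Proposition~\ref{pro:2.21} (via Theorem~\ref{thm:2.14}) is exactly the associator computation you carry out explicitly. You even note this specialization yourself at the end, so there is no meaningful difference in approach---you have just unpacked the reference.
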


 \begin{proof} Let $M=A$ in Proposition \mref{pro:2.21}.\end{proof}

 \begin{ex}\mlabel{ex:3.10-a}
We consider again  a 2-dimensional associative algebra $A$
where the multiplication is defined, with respect to a basis  $\{e_1, e_2\}$,  by
\begin{equation*}
e_1\cdot e_1=e_1, \quad e_1\cdot  e_2=  e_2,
\quad e_2\cdot e_1=e_2,  \quad e_2 \cdot e_2=
e_2.
\end{equation*}
We construct in the following  a family of examples of associated double curved Rota-Baxter systems $(A, R, S, \omega_1, \omega_2)$.
The maps $R, S, \omega_1, \omega_2$ are defined with respect to the basis $\{ e_1,e_2\}$ by
\begin{align*}
 & R(e_1) =-c_{1,1}^2e_2, \quad    R(e_2)=-e_2,\\
 & S(e_1) =0, \quad   S(e_2)=(c_{1,1}^2-c_{1,2}^2)e_2,
\end{align*}
\begin{align*}
&  \omega_1(e_1, e_1)=c_{1,1}^2(1-c_{1,1}^2)e_2, && \omega_1(e_1, e_2)= (c_{1,2}^2-c_{1,1}^2)e_2 ,\\
& \omega_1(e_2, e_1)=b_{2,1}^1e_1+(1-c_{1,1}^2-b_{2,1}^1c_{1,1}^2)e_2,  && \omega_1(e_2 , e_2)=
b_{2,2}^1e_1+(c_{1,2}^2-c_{1,1}^2-b_{2,2}^1 c_{1,1}^2)e_2,
\end{align*}
\begin{align*}
&  \omega_2(e_1, e_1)=c_{1,1}^1e_1+c_{1,1}^2e_2, && \omega_2(e_1, e_2)=  c_{1,2}^1e_1+c_{1,2}^2e_2,\\
& \omega_2(e_2, e_1)=e_1+e_2,  && \omega_2(e_2 , e_2)=
e_2,
\end{align*}
where $b_{i,j}^k,c_{i,j}^k$ are parameters.
\end{ex}

 \smallskip

 Next we introduce the notion of double curved weak pseudotwistor:

 \begin{defi}\mlabel{de:3.9-a} Let $(A, \mu)$ be an associative algebra. A linear map $T: A\otimes A\longrightarrow A\otimes A$ is called a {\bf double curved weak pseudotwistor} if there exist linear maps $\mathcal{T}: A\otimes A\otimes A\longrightarrow A\otimes A\otimes A$ and $\omega_i: A\otimes A\longrightarrow A, i=1,2$, rendering commutative the following diagrams: \\
\begin{equation}\mlabel{eq:3.8-a}
\begin{minipage}{13cm}
\unitlength 1mm % = 2.85pt
\linethickness{0.4pt}
\ifx\plotpoint\undefined\newsavebox{\plotpoint}\fi % GNUPLOT compatibility
\begin{picture}(121.5,39.5)(0,0)
\put(-7,4.5){$A\otimes A\otimes A$}
\put(-7,36.25){$A\otimes A\otimes A$}
\put(115.5,5.25){$A\otimes A\otimes A$}
\put(115.5,37){$A\otimes A\otimes A$}
\put(12.5,5.25){\vector(1,0){45.25}}
\put(12.5,37){\vector(1,0){45.25}}
\put(114.75,5.5){\vector(-1,0){45.25}}
\put(114.75,37.25){\vector(-1,0){45.25}}
\put(58.25,4){$A\otimes A$}
\put(58.25,35.75){$A\otimes A$}
\put(8,10.25){\vector(0,1){22.75}}
\put(118,10.25){\vector(0,1){22.75}}
\put(62.25,32.5){\vector(0,-1){22.75}}
\put(33.5,39.5){$\id\otimes \mu$}
\put(92.5,39){$\mu\otimes \id$}
\put(65.75,21.25){$T$}
\put(-3.0,19.75){$\id\otimes T$}
\put(121.5,19.5){$T\otimes \id$}
\put(16.5,.75){$(\id\otimes \mu)\circ \mathcal{T}-\id\otimes \omega_2$}
\put(78.25,1.5){$(\mu\otimes \id)\circ \mathcal{T}-\omega_1\otimes \id$}
\end{picture}
\end{minipage}
\end{equation}
\vspace{0,5cm}
\begin{equation}\mlabel{eq:3.9-a}
\begin{minipage}{7cm}
\unitlength 1mm % = 2.85pt
\linethickness{0.4pt}
\ifx\plotpoint\undefined\newsavebox{\plotpoint}\fi % GNUPLOT compatibility
\begin{picture}(62,38.5)(0,0)
\put(4.5,2.25){$A\otimes A$}
\put(17.5,4.25){\vector(1,0){42.25}}
\put(17.5,35.75){\vector(1,0){42.25}}
\put(61.25,2.25){$A$}
\put(15.25,30.5){\vector(0,-1){21.75}}
\put(61.25,30.5){\vector(0,-1){21.75}}
\put(-1,35.25){$A\otimes A\otimes A$}
\put(60.75,35){$A\otimes A$}
\put(62,18.75){$\mu$}
\put(32,2){$\mu$}
\put(32,38.5){$\id\otimes \omega_2$}
\put(0,19.25){$\omega_1\otimes \id$}
\end{picture}
\end{minipage}
\end{equation}

 The map $\mathcal{T}$ is called a {\bf weak companion of $T$} and $\omega_i, i=1,2$ is called the {\bf curvatures of $T$}.
\end{defi}

 In what follows, we consider the relations between the product $(\mref{eq:3.6-a})$ and double curved weak pseudotwistor.
 Firstly, one has the following statement.
\begin{pro}\mlabel{pro:3.10-a}
 Let $T: A\otimes A\longrightarrow A\otimes A$ be a double curved weak pseudotwistor with weak companion $\mathcal{T}$ and curvatures $\omega_i, i=1,2$. Then $\mu\circ T$ is an associative product on $A$.
\end{pro}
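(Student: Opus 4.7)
The plan is to unfold the associativity of $\ast := \mu \circ T$ into a chain of compositions and then to substitute, in turn, the content of the two commutative diagrams (\ref{eq:3.8}) and (\ref{eq:3.9}) together with the associativity of the underlying product $\mu$. Concretely, I would first translate the diagrams into three working identities of maps $A\otimes A\otimes A \to \cdot$:
\begin{align*}
T\circ(\mu\otimes id)\circ(T\otimes id) &= (\mu\otimes id)\circ\mathcal{T} - \omega_1\otimes id, \\
T\circ(id\otimes \mu)\circ(id\otimes T) &= (id\otimes\mu)\circ\mathcal{T} - id\otimes\omega_2, \\
\mu\circ(\omega_1\otimes id) &= \mu\circ(id\otimes\omega_2).
\end{align*}

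Next I would compute both bracketings of $\ast$. On one side,
\[
(a\ast b)\ast c = \mu\circ T\circ(\mu\otimes id)\circ(T\otimes id)(a\otimes b\otimes c),
\]
and applying the first identity above rewrites this as
\[
\mu\circ(\mu\otimes id)\circ\mathcal{T}(a\otimes b\otimes c) - \mu\circ(\omega_1\otimes id)(a\otimes b\otimes c).
\]
On the other side, similarly,
\[
a\ast(b\ast c) = \mu\circ T\circ(id\otimes\mu)\circ(id\otimes T)(a\otimes b\otimes c) = \mu\circ(id\otimes\mu)\circ\mathcal{T}(a\otimes b\otimes c) - \mu\circ(id\otimes\omega_2)(a\otimes b\otimes c).
\]

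To conclude, I would match the two expressions termwise. The leading terms coincide by the associativity of $\mu$ applied to the output of $\mathcal{T}$, namely $\mu\circ(\mu\otimes id) = \mu\circ(id\otimes \mu)$, and the curvature correction terms coincide by the third identity extracted from diagram (\ref{eq:3.9}). Hence $(a\ast b)\ast c = a\ast(b\ast c)$ for all $a,b,c\in A$, which is the desired associativity.

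I do not anticipate a genuine obstacle here, as the argument is a bookkeeping exercise once the diagrams are correctly parsed; the only subtlety is making sure that the sign/structure in the two labels $(\mu\otimes id)\circ\mathcal{T} - \omega_1\otimes id$ and $(id\otimes \mu)\circ\mathcal{T} - id\otimes\omega_2$ of diagram (\ref{eq:3.8}) is read off in the correct order of composition, so that the single diagram (\ref{eq:3.9}) suffices to cancel the two curvature terms against each other after post-composition with $\mu$.
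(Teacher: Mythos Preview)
Your proof is correct and follows essentially the same route as the paper: both arguments use the two squares of diagram~(\ref{eq:3.8}) to rewrite each bracketing of $\mu\circ T$ as $\mu\circ(\mu\otimes id)\circ\mathcal{T}$ (resp.\ $\mu\circ(id\otimes\mu)\circ\mathcal{T}$) minus a curvature term, then invoke the associativity of $\mu$ and diagram~(\ref{eq:3.9}) to match the two expressions. Your version just unpacks the identities a bit more explicitly than the paper's three-line chain of equalities.
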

 \begin{proof} With the help of associativity of $\mu$ one easily computes,
 \begin{eqnarray*}
 \mu\circ T\circ (\id\otimes \mu\circ T)
 &\stackrel{\eqref{eq:3.8-a}}{=}&\mu\circ (\id\otimes \mu)\circ \mathcal{T}-\mu\circ (\id \circ \omega_2)\\
 &\stackrel{\eqref{eq:3.9-a}}{=}&\mu\circ (\mu \circ \id)\circ \mathcal{T}-\mu\circ (\omega_1\circ \id)\\
 &\stackrel{\eqref{eq:3.8-a}}{=}&\mu\circ T\circ (\mu\circ T \circ \id),
 \end{eqnarray*}
 finishing the proof.\end{proof}
Secondly, we can get:

\begin{pro}\mlabel{pro:3.11-a}
 Let $(A, R, S, \omega_1, \omega_2)$ be a double curved Rota-Baxter system with the curvatures $\omega_i, i=1,2$ that satisfy  \eqref{eq:3.7-a}, and define, for all $a, b, c\in A$,
 \begin{align*}
 & T(a\otimes b)=R(a)\otimes b+a\otimes S(b),
 \\
 & \mathcal{T}(a\otimes b\otimes c)=R(a)\otimes R(b)\otimes c+R(a)\otimes b\otimes S(c)+a\otimes S(b)\otimes S(c).
 \end{align*}
 Then $T$ is a double curved weak pseudotwistor with weak companion $\mathcal{T}$ and curvatures $\omega_i, i=1,2$.
\end{pro}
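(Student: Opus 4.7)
The statement reduces to checking the two commutative diagrams (3.8) and (3.9). I would handle them separately by unpacking each into an identity of maps evaluated on a generic element $a\otimes b\otimes c$ of $A^{\otimes 3}$, and in each case reducing to a single application of the defining identities of the double curved Rota-Baxter system.

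Diagram (3.9) is immediate: reading it on $a\otimes b\otimes c$ yields precisely $\omega_1(a\otimes b)\,c = a\,\omega_2(b\otimes c)$, which is exactly the hypothesis (3.7). No work there.

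For diagram (3.8), I would split it into the right-hand pentagon
\[
T\circ(\mu\otimes id)\circ(T\otimes id) \;=\; (\mu\otimes id)\circ\mathcal{T} \;-\; \omega_1\otimes id
\]
and the left-hand pentagon
\[
T\circ(id\otimes \mu)\circ(id\otimes T) \;=\; (id\otimes \mu)\circ\mathcal{T} \;-\; id\otimes \omega_2.
\]
For the right pentagon I would expand the left-hand side on $a\otimes b\otimes c$ to $R(R(a)b)\otimes c + R(a)b\otimes S(c) + R(aS(b))\otimes c + aS(b)\otimes S(c)$, and the right-hand side to $R(a)R(b)\otimes c + R(a)b\otimes S(c) + aS(b)\otimes S(c) - \omega_1(a\otimes b)\otimes c$. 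Matching all but the first tensor slot is trivial; the coefficient of $(-)\otimes c$ agrees precisely because of (3.4), after using the linearity identity $R(R(a)b+aS(b))=R(R(a)b)+R(aS(b))$. The left pentagon is verified analogously, with the discrepancy now in the second tensor slot, where (3.5) combined with linearity of $S$ does the job.

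The whole argument is a mechanical verification. The only real concern is organisational: keeping straight which tensor factor each summand of $\mathcal{T}(a\otimes b\otimes c)=R(a)\otimes R(b)\otimes c+R(a)\otimes b\otimes S(c)+a\otimes S(b)\otimes S(c)$ contributes to, and matching it against the correct slot coming out of $T\circ(\mu\otimes id)\circ(T\otimes id)$ or its $id\otimes \mu$ partner. Associativity of $\mu$ plays no role in this step, and hypothesis (3.7) is used only for diagram (3.9).
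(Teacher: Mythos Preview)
Your proposal is correct and follows essentially the same approach as the paper: diagram (3.9) is identified with hypothesis (3.7), and each half of diagram (3.8) is verified by direct expansion on a generic tensor $a\otimes b\otimes c$, invoking (3.4) for the right square and (3.5) for the left. The only cosmetic difference is that the paper keeps $R(R(a)b+aS(b))$ and $S(R(b)c+bS(c))$ unexpanded before applying (3.4) and (3.5), whereas you split them by linearity first.
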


 \begin{proof} The commutativity of diagram $(\mref{eq:3.9-a})$ is equivalent to \eqref{eq:3.7-a}. To check the commutativity of the left square in diagram $\eqref{eq:3.8-a}$, let us take any $a, b, c\in A$ and compute
 \begin{eqnarray*}
 T\circ (\id\otimes \mu\circ T)(a\otimes b\otimes c)
 &=&T(a\otimes R(b)c+a\otimes bS(c))\\
 &=&R(a)\otimes (R(b)c+bS(c))+a\otimes S(R(b)c+bS(c))\\
 &\stackrel{\eqref{eq:3.5-a}}{=}&R(a)\otimes R(b)c+R(a)\otimes bS(c)+a\otimes S(b)S(c)-a\otimes \omega_2(b\otimes c)\\
 &=&((\id\otimes \mu)\circ \mathcal{T}-\id\circ \omega_2)(a\otimes b\otimes c).
 \end{eqnarray*}
 In addition,
 \begin{eqnarray*}
 T\circ (\mu\circ T\otimes \id)(a\otimes b\otimes c)
 &=&T(R(a)b\otimes c+aS(b)\otimes c)\\
 &=&R(R(a)b+aS(b))\otimes c+(R(a)b+aS(b))\otimes S(c)\\
 &\stackrel{\eqref{eq:3.4-a}}{=}&R(a)R(b)\otimes c+R(a)b\otimes S(c)+aS(b)\otimes S(c)-\omega_1(a\otimes b)\otimes c\\
 &=&((\mu\otimes \id)\circ \mathcal{T}-\omega_1\circ \id)(a\otimes b\otimes c).
 \end{eqnarray*}
 So the commutativity of the right square in diagram $\eqref{eq:3.8-a}$ is checked.  \end{proof}

 For the relation between double curved Rota-Baxter system and pre-Lie algebra, we have:

\begin{cor}\mlabel{cor:3.12-a}
 Let $(A, R, S, \omega_1, \omega_2)$ be a double curved Rota-Baxter system with the curvatures $\omega_i, i=1,2$. Then $A$ with operation $\circ$ defined by
 $$
 a\circ b=R(a)b-bS(a),
 $$
 is a pre-Lie algebra if and only if, for all $a, b\in A$,
 $$
 \omega_1(a\otimes b)-\omega_2(b\otimes a)
 $$
 is in the center of $A$.
\end{cor}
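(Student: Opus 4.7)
My plan is to verify the pre-Lie identity $(a\circ b)\circ c - a\circ(b\circ c) = (b\circ a)\circ c - b\circ(a\circ c)$ from Definition \ref{de:1.3} by a direct expansion of the associator, isolating exactly the curvature contributions that survive the symmetrization step.

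First, I would expand $(a\circ b)\circ c$ by substituting $a\circ b = R(a)b - bS(a)$ into the outer $\circ$-product, producing four terms of the form $R(R(a)b)c$, $-R(bS(a))c$, $-cS(R(a)b)$, $cS(bS(a))$. Next, I would expand
\[
a\circ(b\circ c) = R(a)\bigl(R(b)c - cS(b)\bigr) - \bigl(R(b)c - cS(b)\bigr)S(a),
\]
which contains the crucial quadratic products $R(a)R(b)$ and $S(b)S(a)$. This is the step where the defining relations of a double curved Rota--Baxter system enter: using (\ref{eq:3.4}) and (\ref{eq:3.5}),
\[
R(a)R(b) = R(R(a)b) + R(aS(b)) + \omega_1(a,b), \qquad S(b)S(a) = S(R(b)a) + S(bS(a)) + \omega_2(b,a).
\]
After applying associativity of $\mu$ to cancel $R(R(a)b)c$ and $cS(bS(a))$ between the two expansions, the associator $A(a,b,c) := (a\circ b)\circ c - a\circ(b\circ c)$ reduces to six ``structural'' summands (of the shapes $R(bS(a))c$, $R(aS(b))c$, $cS(R(a)b)$, $cS(R(b)a)$, $R(a)cS(b)$, $R(b)cS(a)$) together with the two curvature contributions $-\omega_1(a,b)c$ and $-c\omega_2(b,a)$.

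The decisive second step is to form $A(a,b,c) - A(b,a,c)$: each of the six structural summands in $A(a,b,c)$ reappears, up to the swap $a\leftrightarrow b$, in $A(b,a,c)$ and hence cancels in the difference. Only the curvature terms survive, giving
\[
A(a,b,c) - A(b,a,c) = \bigl[\omega_1(b,a) - \omega_1(a,b)\bigr]c + c\bigl[\omega_2(a,b) - \omega_2(b,a)\bigr].
\]
The pre-Lie identity is equivalent to the vanishing of this expression for all $a,b,c\in A$. Regrouping to pair $\omega_1(a,b)$ with $\omega_2(b,a)$ (and similarly the swapped terms), this identity rewrites as $[\omega_1(a\otimes b) - \omega_2(b\otimes a),c] = 0$ for all $c$, that is, $\omega_1(a\otimes b) - \omega_2(b\otimes a) \in Z(A)$.

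The main obstacle is the combinatorial bookkeeping in the symmetrization: one must handle twelve individual summands (six in $A(a,b,c)$ and six in $A(b,a,c)$) and verify carefully that, modulo the swap $a\leftrightarrow b$, they pair up and cancel so that only the four curvature terms remain. Once this cancellation is in place, the recognition of the residual identity as a centrality condition follows by collecting left- and right-multiplications by $c$.
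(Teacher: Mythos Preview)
Your computation of the associator is correct, and the symmetrization step does cancel the six structural terms exactly as you describe, leaving
\[
A(a,b,c)-A(b,a,c)=\bigl[\omega_1(b,a)-\omega_1(a,b)\bigr]c+c\bigl[\omega_2(a,b)-\omega_2(b,a)\bigr].
\]
The gap is in your last step. You assert that ``regrouping to pair $\omega_1(a,b)$ with $\omega_2(b,a)$'' rewrites the vanishing of this expression as $[\omega_1(a\otimes b)-\omega_2(b\otimes a),\,c]=0$. But that commutator expands as
\[
\omega_1(a,b)\,c-\omega_2(b,a)\,c-c\,\omega_1(a,b)+c\,\omega_2(b,a),
\]
which contains the mixed terms $\omega_2(b,a)\,c$ and $c\,\omega_1(a,b)$; these simply do not occur in your associator difference, where only $\omega_1$ gets right-multiplied by $c$ and only $\omega_2$ gets left-multiplied by $c$. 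Relabelling $a\leftrightarrow b$ does not help, since both orderings are already present. So the passage from ``$A(a,b,c)=A(b,a,c)$ for all $a,b,c$'' to ``$\omega_1(a\otimes b)-\omega_2(b\otimes a)\in Z(A)$ for all $a,b$'' is not the tautology you present it as: for instance, with $\omega_2=0$ your identity reduces to $\bigl[\omega_1(b,a)-\omega_1(a,b)\bigr]c=0$, which is a statement about the \emph{antisymmetric part} of $\omega_1$ being a right annihilator, not about $\omega_1$ taking central values.

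The paper records this corollary without proof, so there is nothing to compare your argument against; but your final regrouping needs either a genuine justification or an explicit acknowledgment that the two conditions coincide only under further hypotheses.
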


 \section*{Acknowledgment} 
Tianshui Ma is grateful to the Erasmus Mundus project FUSION for supporting the postdoctoral fellowship visiting to M\"alardalen University,
%V{\"a}steras,
V{\"a}ster{\aa}s,
Sweden and to the Division of Mathematics and Physics,  School of Education, Culture and Communication at M\"alardalen University for cordial hospitality from 2016 to 2017, when the initial research preprint version of the present work was completed \cite{MMS2017arxCOopsyst}.\\
This work is supported by the
Natural Science Foundation of Henan Province (No. 242300421389).

\

 The authors report there are no competing interests to declare.

 \end{document}